\numberwithin{equation}{section}
\numberwithin{figure}{section}
\theoremstyle{plain}
\newtheorem{thm}{\protect\theoremname}
\theoremstyle{remark}
\newtheorem{rem}[thm]{\protect\remarkname}
\theoremstyle{plain}
\newtheorem{lem}[thm]{\protect\lemmaname}
\theoremstyle{plain}
\newtheorem{cor}[thm]{\protect\corollaryname}
\providecommand{\corollaryname}{Corollary}
\providecommand{\lemmaname}{Lemma}
\providecommand{\remarkname}{Remark}
\providecommand{\theoremname}{Theorem}
\begin{document}
\global\long\def\e{e}%
\global\long\def\V{{\rm Vol}}%
\global\long\def\bs{\boldsymbol{\sigma}}%
\global\long\def\bx{\mathbf{x}}%
\global\long\def\bz{\mathbf{z}}%
\global\long\def\by{\mathbf{y}}%
\global\long\def\bv{\mathbf{v}}%
\global\long\def\bu{\mathbf{u}}%
\global\long\def\bn{\mathbf{n}}%
\global\long\def\bM{\mathbf{M}}%
\global\long\def\bG{\mathbf{G}}%
\global\long\def\diag{{\rm diag}}%
\global\long\def\sol{\mathcal{Z}}%
\global\long\def\SN{\mathbb{S}^{N}}%
\global\long\def\SNK{\mathbb{S}^{N-\K}}%
\global\long\def\SNt{\mathbb{S}^{N-1}}%
\global\long\def\E{\mathbb{E}}%
\global\long\def\P{\mathbb{P}}%
\global\long\def\R{\mathbb{R}}%
\global\long\def\NN{\mathbb{N}}%
\global\long\def\BN{\mathbb{B}^{N}}%
\global\long\def\N{\mathsf{N}}%
\global\long\def\indic{\mathbf{1}}%
\global\long\def\M{K}%
\global\long\def\K{K}%
\global\long\def\J{J}%
\global\long\def\HaussN{\mathcal{H}_{N}}%
\newcommand{\WLOG}{{\sffamily{WLOG}}}
\title{Concentration for the zero set of large random polynomial systems }
\author{Eliran Subag}
\begin{abstract}
For random systems of $\K$ polynomials in $N+1$ real variables which
include those of Kostlan (1987) and Shub and Smale (1993), we
prove that the number of zeros on the unit sphere for $\K=N$ or the Hausdorff measure of the zero
set for $\K<N$ concentrates around its mean as
$N\to\infty$. To prove concentration we show that the variance of
the latter random variable normalized by its mean goes to zero. The
polynomial systems we consider depend on a set of parameters
which determine the variance of their Gaussian coefficients. We prove
that the convergence is uniform in those parameters and $\K$.
\end{abstract}

\maketitle

\section{Introduction }

Kostlan \cite{Kostlan1987,Kostlan1993,Kostlan2000} and Shub and Smale \cite{ShubSmaleI,ShubSmaleII,ShubSmaleIII,ShubSmaleIV,ShubSmaleV} studied large systems of random homogeneous polynomials in many variables.
Their original motivation, and the subject of Smale's well-known 17th problem \cite{SmaleProblems} was algorithmic questions about root-finding. In the complex case, after extensive study the 17th problem was resolved in recent years \cite{BeltranPardo2,BeltranPardo1,BurgisserCucker,Lairez1,Lairez2}. In the real, more difficult case, however, much less is known. 
Our goal in the current work is to understand the size of the set of solutions. While its expectation is known since the works of Kostlan, Shub and Smale from the 90s, its typical behavior is known only when the polynomials are identically distributed and the number of equations is maximal \cite{Wschebor}.
For systems of real random polynomials which include as a specific case those of Kostlan, Shub and Smale we prove, by a second moment argument, that the size of the set of solutions is typically close to its expectation, as the number of variables tends to infinity.

Let $N\in\mathbb{N}$ and $\mathbf{G}_{p}=(g_{i_{1},\dots,i_{p}}: 0\leq i_{1},\dots,i_{p}\leq N)$ be random tensors
whose elements are i.i.d. $\N(0,1)$ random variables, independent also for different $p$. 
Given some $d\geq 2$ and deterministic $a_{p}\in \R$,
\begin{equation}
	f_{N}(\bx):=\sum_{p=2}^{d}a_{p}\langle\mathbf{G}_{p},\bx^{\otimes p}\rangle:=\sum_{p=2}^{d}a_{p}\sum_{i_{1},\ldots,i_{p}=0}^{N}g_{i_{1},\dots,i_{p}}x_{i_{1}}\cdots x_{i_{p}}\label{eq:random_poly}
\end{equation}
is a random polynomial function of degree $d$ in $N+1$ real variables $\bx=(x_{0},x_{1},\ldots,x_{N})\in\R^{N+1}$.
 One can check that the centered Gaussian process $f_{N}(\bx)$ has the covariance function
$\E f_{N}(\bx)f_{N}(\by)=\xi(\bx\cdot\by)$ for $\xi(t):=\sum_{p=2}^{d}a_{p}^{2}t^{p}$.
The law of $f_{N}(\bx)$ is therefore invariant
under rotations.

We study random polynomial systems  of the form $\vec{f}_{N}(\bx):=(f_{N}^{(1)}(\bx),\ldots,f_{N}^{(N)}(\bx))$.
We will assume that the polynomials $f_{N}^{(k)}(\bx)$ are independent
of each other and that each is equal in distribution to $f_{N}(\bx)$ as above for some deterministic
$d=d_{N,k}$ and $a_p=a_{p,N,k}$. 
We will encode these parameters through $\Xi:=(\xi_{N,k})_{N=1,k=1}^{\infty,N}$ where $\xi_{N,k}(t):=\sum_{p=2}^{d_{N,k}}a_{p,N,k}^{2}t^{p}$.

Given some $\K\leq N$, consider the zero set of the first $\K$ polynomials
on the unit sphere $\SN\subset\R^{N+1}$,
\begin{equation}
\left\{ \bx:\,f_{N}^{(1)}(\bx)=\cdots=f_{N}^{(\K)}(\bx)=0,\,\|\bx\|=1\right\} .\label{eq:zeroset}
\end{equation}
This set is almost surely finite when $\K=N$ or a manifold of dimension
$N-\K$ when $\K<N$. We define $\sol_{N,\K}=\sol_{N,\K}(\Xi)$ as
the cardinality of the set in the former case, and as the $N-\K$
dimensional Hausdorff measure in the latter. A standard (by now) application
of the Kac-Rice formula \cite{Kac,Rice} allows one
to compute the \emph{expectation} of $\sol_{N,\K}$.
\begin{thm}
[Expected value]\label{thm:1stmoment}For any $N$, $\K$ and $\Xi$,
\begin{equation}
\E\sol_{N,\K}=\V(\SNK)\prod_{k=1}^{\M}\sqrt{\frac{\xi_{N,k}'(1)}{\xi_{N,k}(1)}},\label{eq:1stmoment}
\end{equation}
where $\V(\mathbb{S}^{d-1})=2\frac{\pi^{\frac{d}{2}}}{\Gamma(\frac{d}{2})}$
is the volume of the unit sphere in $\R^{d}$.
\end{thm}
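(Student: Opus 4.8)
The plan is to apply the Kac--Rice formula for the expected $(N-\K)$-dimensional Hausdorff measure of the zero set of a smooth nondegenerate Gaussian field, in its by-now-standard ``metatheorem'' form. Write $D\vec{f}_{N}(\bx)$ for the $\K\times N$ matrix whose $k$-th row is the Riemannian gradient $\nabla_{\SN}f_{N}^{(k)}(\bx)\in T_{\bx}\SN$, and $J_{N}(\bx)=\sqrt{\det\big(D\vec{f}_{N}(\bx)D\vec{f}_{N}(\bx)^{\mathsf T}\big)}$ for the associated normal Jacobian. Each $f_{N}^{(k)}$ is a polynomial, hence a.s.\ $C^{\infty}$ on $\SN$, and $\vec{f}_{N}(\bx)$ is at every $\bx\in\SN$ a nondegenerate Gaussian vector with covariance $\diag\big(\xi_{N,1}(1),\dots,\xi_{N,\K}(1)\big)$ (one assumes $\xi_{N,k}(1)>0$, as is needed even to make sense of \eqref{eq:1stmoment}); together with the nondegeneracy of the Jacobian law noted below, this puts us in the regime where
\[
\E\sol_{N,\K}=\int_{\SN}\E\big[J_{N}(\bx)\,\big|\,\vec{f}_{N}(\bx)=0\big]\,p_{\vec{f}_{N}(\bx)}(0)\,d\HaussN(\bx),\qquad p_{\vec{f}_{N}(\bx)}(0)=\prod_{k=1}^{\K}\big(2\pi\,\xi_{N,k}(1)\big)^{-1/2}.
\]

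Since the law of each $f_{N}^{(k)}$ is rotation invariant the integrand is constant in $\bx$, so I would evaluate it at the north pole $\mathbf{e}_{0}=(1,0,\dots,0)$, where $T_{\mathbf{e}_{0}}\SN=\mathrm{span}\{\mathbf{e}_{1},\dots,\mathbf{e}_{N}\}$ and the intrinsic gradient agrees with the tangential part of the ambient gradient. Differentiating $\E f_{N}^{(k)}(\bx)f_{N}^{(k)}(\by)=\xi_{N,k}(\bx\cdot\by)$ and evaluating at $\bx=\by=\mathbf{e}_{0}$ gives the cross-covariances $\E\big[f_{N}^{(k)}(\mathbf{e}_{0})\,\partial_{i}f_{N}^{(k)}(\mathbf{e}_{0})\big]=0$ and $\E\big[\partial_{i}f_{N}^{(k)}(\mathbf{e}_{0})\,\partial_{j}f_{N}^{(k)}(\mathbf{e}_{0})\big]=\xi_{N,k}'(1)\,\delta_{ij}$ for $1\le i,j\le N$; hence at any point the value and the gradient of $f_{N}^{(k)}$ are independent (so the conditioning in the integrand is vacuous), and $\nabla_{\SN}f_{N}^{(k)}(\mathbf{e}_{0})\sim\N\big(0,\xi_{N,k}'(1)I_{N}\big)$, independently over $k$. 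Thus $D\vec{f}_{N}(\mathbf{e}_{0})$ has the law of $\diag\big(\sqrt{\xi_{N,1}'(1)},\dots,\sqrt{\xi_{N,\K}'(1)}\big)\,\bM$ for $\bM$ a $\K\times N$ matrix of i.i.d.\ $\N(0,1)$ entries (a full-rank law, which also rules out critical zeros), and
\[
\E\big[J_{N}(\mathbf{e}_{0})\,\big|\,\vec{f}_{N}(\mathbf{e}_{0})=0\big]=\Big(\prod_{k=1}^{\K}\sqrt{\xi_{N,k}'(1)}\Big)\,\E\sqrt{\det(\bM\bM^{\mathsf T})}.
\]

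It remains to compute the Gaussian matrix moment $\E\sqrt{\det(\bM\bM^{\mathsf T})}$, which I would do by the Gram--Schmidt (Bartlett) decomposition of the rows of $\bM$: $\det(\bM\bM^{\mathsf T})$ equals in law the product of independent chi-squares $\prod_{j=0}^{\K-1}\chi^{2}_{N-j}$, so $\E\sqrt{\det(\bM\bM^{\mathsf T})}=\prod_{j=0}^{\K-1}\sqrt{2}\,\Gamma\!\big(\tfrac{N-j+1}{2}\big)/\Gamma\!\big(\tfrac{N-j}{2}\big)=2^{\K/2}\,\Gamma\!\big(\tfrac{N+1}{2}\big)/\Gamma\!\big(\tfrac{N-\K+1}{2}\big)$ after telescoping. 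Substituting this and $p_{\vec{f}_{N}}(0)$ into the Kac--Rice integral, integrating the resulting constant over $\SN$ via $\HaussN(\SN)=2\pi^{(N+1)/2}/\Gamma\!\big(\tfrac{N+1}{2}\big)$, and simplifying (the $\Gamma\!\big(\tfrac{N+1}{2}\big)$ factors cancel and $2^{\K/2}/(2\pi)^{\K/2}=\pi^{-\K/2}$) leaves $2\pi^{(N-\K+1)/2}/\Gamma\!\big(\tfrac{N-\K+1}{2}\big)=\V(\SNK)$ times $\prod_{k=1}^{\K}\sqrt{\xi_{N,k}'(1)/\xi_{N,k}(1)}$, which is \eqref{eq:1stmoment}; for $\K=N$ this reduces to the classical count of roots of a Gaussian system, with $\V(\mathbb{S}^{0})=2$. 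The only step that genuinely requires care is the rigorous invocation of the Kac--Rice metatheorem in the positive-codimension case $\K<N$ (verifying the regularity and nondegeneracy hypotheses for the Hausdorff-measure version) and the bookkeeping separating intrinsic from ambient derivatives that identifies the on-sphere gradient covariance as $\xi_{N,k}'(1)I_{N}$; everything else is routine Gaussian calculation.
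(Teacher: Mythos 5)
Your proof is correct and reaches the same Kac--Rice integral, with the same identification of the gradient law at a point ($\nabla_{\SN}f_{N}^{(k)}$ independent of the field value, i.i.d.\ $\N(0,\xi_{N,k}'(1))$ entries), so $D\vec{f}_{N}=\diag(\sqrt{\xi_{N,k}'(1)})\bM$ with $\bM$ standard Gaussian. Where you genuinely diverge from the paper is in the evaluation of the remaining Gaussian moment $\E\sqrt{\det(\bM\bM^{\mathsf T})}$. You compute it head-on by the Bartlett/Gram--Schmidt decomposition, writing $\det(\bM\bM^{\mathsf T})\overset{d}{=}\prod_{j=0}^{\K-1}\chi^{2}_{N-j}$, so that $\E\sqrt{\det(\bM\bM^{\mathsf T})}=2^{\K/2}\Gamma(\tfrac{N+1}{2})/\Gamma(\tfrac{N-\K+1}{2})$, and then let the gamma factors cancel against $\HaussN(\SN)$. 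The paper instead sidesteps the Wishart determinant moment entirely: it observes that the identity $\V(\SN)\,\E\J(\bM)\,(2\pi)^{-\K/2}=\V(\SNK)$ must hold because for the degenerate linear choice $\xi(t)=t$ (i.e.\ $f^{(k)}(\bx)=\bx\cdot G^{(k)}$ with $G^{(k)}$ i.i.d.\ standard Gaussian vectors) the zero set is deterministically a great sphere $\SNK$, so $\sol_{N,\K}=\V(\SNK)$ a.s., and the same Kac--Rice expression applies; since the law of $\bM$ is independent of $\Xi$, the constant carries over to the general case. Your route buys self-containedness --- the Bartlett decomposition is a standard and fully explicit computation; the paper's trick buys brevity and avoids any explicit chi/gamma bookkeeping. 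Both are valid, and your final simplification (the $\Gamma(\tfrac{N+1}{2})$ cancellation yielding $\V(\SNK)$) is correct. The one point where you are appropriately cautious --- the rigorous verification of the Kac--Rice hypotheses for the positive-codimension Hausdorff-measure version --- is exactly the content of the paper's Remark~\ref{rem:regularity}, which reduces to the sphere via the radial extension $\vec{f}_{\K}(\bx/\|\bx\|)$ and checks nondegeneracy using Lemma~\ref{lem:gradient}; your sketch implicitly relies on the same nondegeneracy of the gradient law, so no substantive gap there.
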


Our main result is that the \emph{variance} of $\sol_{N,\K}$ normalized
by its mean tends to zero as $N\to\infty$ and this random variable thus concentrates,
\emph{uniformly} in $\Xi$ and $\K$. If $\xi_{N,k}(t)=t^{d_{N,k}}$ for all $N,k$, then $\vec{f}_{N}(\bx)$
is a system of homogeneous polynomials. Denote by $\mathscr{H}$ the
class of all such $\Xi$ and denote by $\mathfrak{\mathscr{H}}_{d}$
its subclass of constant degree with $d_{N,k}=d$ for all $N$ and $k$. The former
are sometimes called mixed homogeneous systems and the latter non-mixed homogeneous systems. 

\begin{thm}
[Concentration]\label{thm:2ndmoment-O(1)} Fix $\alpha \in(0,\frac12\log 2)$. Then, as $N\to\infty$,
\begin{equation}
\sup_{\K\leq N}\sup_{\Xi}{\rm Var}\left(\frac{\sol_{N,\K}}{\E\sol_{N,\K}}\right)\leq  N^{-\frac12}(2+ o(1)),\label{eq:momentsmatching}
\end{equation}
where the supremum is either over all $\Xi$ such that $\max_{k\leq N}\deg(\xi_{N,k})\leq e^{e^{N\alpha}}$ or over $\mathscr{H}$.
\end{thm}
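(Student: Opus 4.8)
\textbf{Step 1: Kac--Rice and reduction to one dimension.} By the Kac--Rice formula when $\K=N$, and by its coarea-formula analogue when $\K<N$, $\E[\sol_{N,\K}(\sol_{N,\K}-1)]$ (resp.\ $\E[\sol_{N,\K}^{2}]$) equals $\int_{\SN\times\SN}\rho_2(\bx,\by)\,d\bx\,d\by$, where $\rho_2(\bx,\by)$ is the value at $0\in\R^{\K}\times\R^{\K}$ of the joint density of $(\vec f_N(\bx),\vec f_N(\by))$ times $\E[\sqrt{\det(D_\bx D_\bx^{T})}\sqrt{\det(D_\by D_\by^{T})}\mid\vec f_N(\bx)=\vec f_N(\by)=0]$, with $D_\bx$ the $\K\times N$ matrix whose $k$-th row is the spherical gradient $\nabla f_N^{(k)}(\bx)$. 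By invariance $\rho_2(\bx,\by)=\rho_2(t)$ depends only on $t=\bx\cdot\by$, while $\rho_1:=\E\sol_{N,\K}/\V(\SN)$ is constant by Theorem~\ref{thm:1stmoment}. Slicing $\SN$ at height $t$ and using $\frac{\V(\SNt)}{\V(\SN)}\int_{-1}^{1}(1-t^{2})^{(N-2)/2}dt=1$ gives
\[
{\rm Var}\Big(\frac{\sol_{N,\K}}{\E\sol_{N,\K}}\Big)=\frac{\V(\SNt)}{\V(\SN)}\int_{-1}^{1}\Big(\frac{\rho_2(t)}{\rho_1^{2}}-1\Big)(1-t^{2})^{(N-2)/2}\,dt+R_N,
\]
with $R_N=(\E\sol_{N,\K})^{-1}$ for $\K=N$ and $R_N=0$ for $\K<N$; since $\xi_{N,k}'(1)/\xi_{N,k}(1)\ge2$ one has $\E\sol_{N,\K}\ge\V(\SNK)2^{\K/2}$ (and $\ge\V(\SNK)\prod_k\sqrt{\deg\xi_{N,k}}$ over $\mathscr{H}$), so $R_N$ is negligible. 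The displayed weight is a probability measure concentrating at $t=0$ on scale $N^{-1/2}$, so the main term comes from $|t|\lesssim N^{-1/2}$.

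\textbf{Step 2: factorising the ratio.} By independence of the $f_N^{(k)}$, $\rho_2(t)/\rho_1^{2}$ is a product of a density ratio and a Jacobian ratio. The density ratio is explicit, $\prod_{k=1}^{\K}(1-r_k(t)^{2})^{-1/2}$ with $r_k(t)=\xi_{N,k}(t)/\xi_{N,k}(1)$; one has $t^{\deg\xi_{N,k}}\le r_k(t)\le t^{2}$ for $t\in[0,1]$, so this factor is $1+O(\sum_k r_k(t)^{2})=1+O(N^{-1})$ when $t\asymp N^{-1/2}$, while near $t=\pm1$ it is $\le(1-t^{2})^{-\K/2}$ and in fact comparable to $(1-t^{2})^{-\K/2}\,\V(\SNK)/\E\sol_{N,\K}$. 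The Jacobian ratio is the conditional determinant object; computing the conditional law of the gradient rows by Gaussian regression on $(f_N^{(k)}(\bx),f_N^{(k)}(\by))$, one records: at $t=0$ the fields and all gradients decouple (as $\xi_{N,k}(0)=\xi_{N,k}'(0)=0$), so $\rho_2(0)=\rho_1^{2}$; conditioning on $f_N^{(k)}(\by)=0$ merely shrinks the component of $\nabla f_N^{(k)}(\bx)$ along the tangential direction toward $\by$, by $1-O(\xi_{N,k}'(t)^{2}/(\xi_{N,k}(1)\xi_{N,k}'(1)))$; the cross-covariance of the rows of $D_\bx$ and $D_\by$ has norm $O(\xi_{N,k}'(t)/\xi_{N,k}'(1))=O(t)$; and as $t\to\pm1$ the equations $f_N^{(k)}(\by)=0$ force every $\nabla f_N^{(k)}(\bx)$ to be nearly orthogonal to the almost-common direction $\bx\to\by$, making $D_\bx,D_\by$ nearly rank-deficient.

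\textbf{Step 3: assembling the bound.} Split $\int_{-1}^{1}$ at $|t|=N^{-1/2+\epsilon}$. On the bulk the density ratio contributes $O(N^{-1})$, and a perturbative expansion of the Jacobian ratio in $t$ — the core calculation — shows the integrand equals $1+O(N^{-1/2})$ at the relevant scale; integrating against the weight yields the claimed $N^{-1/2}(2+o(1))$, the key point being that the $O(t)$ correlation between $D_\bx$ and $D_\by$, although not negligible when $t\asymp N^{-1/2}$, enlarges the product of the two conditional determinant factors by only $1+O(N^{-1/2})$. Off the bulk the weight is $\le e^{-cN^{2\epsilon}}$, which absorbs any polynomial-in-$N$ bound on the integrand except near $t=\pm1$; there one combines the comparability of the density ratio to $(1-t^{2})^{-\K/2}\V(\SNK)/\E\sol_{N,\K}$ with the rigidity suppression of the Jacobian to cap the integrand, the worst case being $1-|t|$ at the scale where rigidity begins, which depends on $\deg\xi_{N,k}$ only logarithmically. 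After dividing through, the tail contribution is $O\big(\mathrm{poly}(N)\cdot\log\max_k\deg\xi_{N,k}\cdot 2^{-\K/2}\big)$ in general (and smaller over $\mathscr{H}$, where $\E\sol_{N,\K}\ge\V(\SNK)\prod_k\sqrt{\deg\xi_{N,k}}$ already absorbs the logarithm). The only dangerous regime is $\K$ near $N$, where $2^{-\K/2}\approx 2^{-N/2}$; then $\alpha<\tfrac12\log 2$ is exactly what makes $\log\max_k\deg\xi_{N,k}\le e^{N\alpha}=o(2^{N/2}/\mathrm{poly}(N))$, so the tail is $o(N^{-1/2})$, whereas over $\mathscr{H}$ no restriction on the degrees is needed.

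\textbf{Main obstacle.} The hard part is Step 3's Jacobian ratio: proving, uniformly in $\Xi$ and $\K$, that $\E[\sqrt{\det(D_\bx D_\bx^{T})\det(D_\by D_\by^{T})}\mid\vec f_N(\bx)=\vec f_N(\by)=0]$ exceeds $\big(\E[\sqrt{\det(D_\bx D_\bx^{T})}\mid\vec f_N(\bx)=0]\big)^{2}$ by at most $1+O(N^{-1/2})$ on the bulk, and by a harmless amount (polynomial in $N$ times the rigidity factor) near $t=\pm1$. This requires sharp moment estimates for products of square roots of Gram determinants of weakly correlated Gaussian matrices with slightly singular covariances, together with the observation that high-degree terms cause no trouble because $r_k(t)$ and $\xi_{N,k}'(t)/\xi_{N,k}'(1)$ are high powers of $t$, hence tiny at the dominant scale $t\asymp N^{-1/2}$.
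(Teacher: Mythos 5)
Your outline captures the right structure: Kac--Rice reduction to a one--dimensional integral in $t=\bx\cdot\by$, a bulk/tail split at scale $|t|\asymp N^{-1/2}$, and the recognition that the $\K=N$ endpoint $t\to\pm1$ is where the degree restriction $\alpha<\tfrac12\log 2$ enters. But the argument has a genuine gap exactly where you label it ``the core calculation'': you assert that a ``perturbative expansion of the Jacobian ratio in $t$'' shows the conditional determinant ratio is $1+O(N^{-1/2})$ at scale $|t|\asymp N^{-1/2}$, yet you give no mechanism for carrying it out. A direct Taylor expansion of $\E\big[\J(\bM^{(1)}(t))\J(\bM^{(2)}(t))\big]$ in the correlation parameter is problematic: $\J$ is a product of singular values, its expansion has infinitely many terms, and bounding the remainder uniformly in $N$, $\K$ and $\Xi$ requires control on all derivatives of a random-matrix functional whose covariance structure ($\lambda^{(k)}(t)$, $\xi_{N,k}'(t)/\xi_{N,k}'(1)$) depends on $\Xi$ in a complicated way. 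Nothing in your proposal explains how to obtain the sharp $1+O(N^{-1})+O(r^2\sqrt N)$ form uniformly.

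The paper's resolution is a different and nontrivial idea that your plan does not anticipate. Its Lemma on Gaussian pairs $(X_t,Y_t)\sim\N(0,\Sigma(t))$ with cross-covariance $\Sigma_1+t\Sigma$ (all blocks PSD) shows that $t\mapsto\E\{f(X_t)f(Y_t)\}$ has a power series with \emph{nonnegative coefficients}. Applied to $\eta(t)=\E[\J(\mathbf W^{(1)}(t))\J(\mathbf W^{(2)}(t))]$, together with the parity $\eta(t)=\eta(-t)$, this gives the one-line convexity bound $\eta(t)\leq\eta(0)+t^2\eta(1)$, completely bypassing any perturbative analysis: one only needs $\eta(0)$ and $\eta(1)$, both of which reduce to closed chi-variable expressions. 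That is what produces Lemma~\ref{lem:Dbar}'s bound $D(r)/(\E\J(\bM(\bx)))^2\leq 1+\tfrac{1}{N-1}+r^2\sqrt{N\pi/2}(1+o(1))$, the source of the constant $2$. A secondary issue: your treatment of the $t\to-1$ tail via ``rigidity suppression'' is not how the paper proceeds; it reduces negative to positive overlaps by a deterministic counting argument (Lemma~\ref{lem:negativeoverlaps}), and the $t\to+1$ tail for $\K=N$ requires the specific estimate $\int \Phi_{N,N}(r)/(1-r^2)\,dr\lesssim\epsilon^{-1}\log\max_k\deg\xi_{N,k}$ of Corollary~\ref{cor:log_bd}, which you gesture at but do not establish.
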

Of course, by Chebyshev's inequality,  
\begin{equation}
\sup_{\K\leq N}\sup_{\Xi}\P\bigg(\,\bigg|\frac{\sol_{N,\K}}{\E\sol_{N,\K}}-1\bigg|>\epsilon\bigg)\leq \frac{N^{-\frac12}}{\epsilon^2}(2+ o(1)).\label{eq:Chebyshev}
\end{equation}

By going through the proof of Theorem \ref{thm:2ndmoment-O(1)}, one can check that the restriction on the maximal degree of $\Xi$ is only required for the case $\K=N$. In fact, it is optimal in the sense that we can construct a system  $\Xi\notin \mathscr{H}$ with maximal degree that grows like $\exp(e^{\frac12\log(2)N+o(N)})$ such that for $\K=N$ the variance blows up as $N\to\infty$, see Remark \ref{rem:blowup}.

For homogeneous systems $\Xi\in\mathscr{H}$, 
Shub and Smale \cite{ShubSmaleII} proved \eqref{eq:1stmoment}
for $\K=N$.\footnote{As they worked with roots in the projective space, their result and \eqref{eq:1stmoment} differ by a factor of $2$ accounting for the fact that every root in the projective space corresponds to two antipodal roots on the sphere.}
 Kostlan \cite{Kostlan1993}
computed earlier the expectation of $\sol_{N,\K}$ for any $\K\leq N$,
but only for $\Xi\in\mathfrak{\mathscr{H}}_{d}$. Aza\"{i}s and Wschebor
proved in \cite{AzaisWschebor} the same result as in \cite{ShubSmaleII}
using the Kac-Rice formula. Some extensions were also given in \cite{AzaisWschebor,EdelmanKostlan}. Although we could not find an explicit proof of Theorem \ref{thm:1stmoment} in earlier works, it easily follows from current standard techniques as in \cite{AzaisWschebor}, see the short proof in Section \ref{sec:1stmoment}. 
Finally, for $\K=N$ and $\Xi\in\mathfrak{\mathscr{H}}_{d}$ with
$d\geq3$ Wschebor proved in \cite{Wschebor} that the normalized variance 
as in (\ref{eq:momentsmatching}) vanishes as $N\to\infty$ (for $d=2$ he showed that the variance
is bounded by $1$ asymptotically). His result applies to the non-mixed
systems $\mathfrak{\mathscr{H}}_{d}$ of Kostlan \cite{Kostlan1993} but does not cover the mixed
homogeneous systems $\mathfrak{\mathscr{H}}$ studied by Shub and Smale, or the more general
non-homogeneous systems we consider. It also does not cover the case
that $\K<N$. Importantly, the mixed case $\Xi\in\mathscr{H}$ is the setting relevant to the real version of Smale's 17th problem.

In a foundational series of papers from the 90s,
\cite{ShubSmaleI,ShubSmaleII,ShubSmaleIII,ShubSmaleV,ShubSmaleIV}
Shub and Smale have studied the computational complexity of solving
systems of homogeneous polynomial equations. An important idea they
have put forward is that this problem needs to be analyzed from a
probabilistic point of view (in fact, \cite{ShubSmaleII,ShubSmaleV,ShubSmaleIV}
deal with probabilistic questions). In the real setting, the random systems they were interested
in are exactly $\vec{f}_{N}(\bx)$ above with $\Xi\in\mathscr{H}$. Their main focus, however, was on the complex setting in which the system is defined similarly, only with  $g_{i_{1},\dots,i_{p}}$
replaced by complex Gaussian variables.\footnote{To define a probability measure they took a more abstract approach
in \cite{ShubSmaleII} instead of the explicit definition (\ref{eq:random_poly}).
First they endow the space of homogeneous polynomials with complex
(resp. real) coefficients with a unitarily (resp. orthogonally) invariant
inner product (sometimes called the Bombieri-Weyl or Kostlan inner
product). This in turn induces a Riemannian structure and volume form
on the corresponding projective space of systems of homogeneous polynomials,
which normalized to $1$ gives a probability measure. The law of the
projection of $\vec{f}_{N}(\bx)$ (with $\Xi\in\mathscr{H}$) to the
projective space of homogeneous polynomial systems coincides with
the latter. See \cite{Kostlan1993,Kostlan2000} for more details.} Note that in the complex case, when $\K=N$ the question we study is trivial as $\sol_{N,N}=2\prod_{k=1}^{N}d_{N,k}$  almost surely, by the B\'{e}zout theorem. 
In 1998 Smale published in \cite{SmaleProblems} a list of `mathematical problems for the next century'. His 17th Problem which we mentioned above, asked whether
a zero of a random system of complex homogeneous polynomial equations can
be found approximately, on the average, in polynomial time with a
uniform algorithm?\footnote{For the precise definition of the terms ``uniform'' and ``approximately''
in this context, we refer the reader to the papers cited in this paragraph.} In \cite{ShubSmaleV} a non-uniform such algorithm was found, though not in a constructive way. Recently,
Smale's problem was resolved in a series of important breakthroughs by Beltr\'{a}n
and Pardo \cite{BeltranPardo2,BeltranPardo1}, B\"{u}rgisser and
Cucker \cite{BurgisserCucker} and Lairez \cite{Lairez1,Lairez2}.
The problem and those algorithms were concerned with complex homogeneous
systems. Smale has also raised in \cite{SmaleProblems} the question
of finding a root of a real random homogeneous system. As he pointed
out, this problem is more difficult. Indeed, no efficient algorithm is known in the real case. For  mixed systems, i.e. where the degrees of the random homogeneous polynomials are allowed to be different,
up to now the only the expectation $\E\sol_{N,\K}$ was known, but not the typical behavior of $\sol_{N,\K}$. To the best of our knowledge, it has not even been established that a solution exists with high probability.

Theorem \ref{thm:2ndmoment-O(1)} is a result about high-dimensions.
Its asymptotics is concerned with the behavior as the dimension of
the space on which the random functions are defined goes to infinity.
As the convergence is uniform, we have concentration, say in the form
of (\ref{eq:Chebyshev}) for a given $\epsilon$, as soon as the dimension
$N$ is sufficiently large irrespective of the choice of $\Xi$ or
$\K$. This asymptotics is very different from the one considered
when studying the zeros of a single polynomial by letting the degree
go to infinity, a topic that has been extensively studied for decades,
see e.g. the monographs \cite{Bharucha-ReidSambandham,Farahmand,ZerosAnalyticFunctions}.
This subject goes back to the works of Bloch and P\'{o}lya \cite{BlochPolya}, Littlewood and Offord \cite{LittlewoodOfford,LittlewoodOffordIII,LittlewoodOffordI,LittlewoodOffordII},
Kac \cite{Kac} and Rice \cite{Rice} starting in the 1930s. In the
context of systems of polynomials as we study in the current work, in addition to the asymptotics we
consider there is another type similar to the latter, in which the
parameter space is fixed. Namely, for non-mixed homogeneous systems
$\mathfrak{\mathscr{H}}_{d}$ with $N$ fixed, the asymptotics of
the size of the zero set was studied as the degree $d$ goes to infinity.
In this setting, Armentano et al. computed the asymptotics of the variance of $\sol_{N,\K}$
as $d\to\infty$ for $\K\leq N$ and proved that it satisfies a CLT
in \cite{Armentano1,Armentano2,Armentano3}. Letendre \cite{Letendre}
computed the variance for $\K<N$ under the same asymptotics. 

For non-homogeneous systems, one may consider the zero set in $\R^{N+1}$
without restricting to the sphere. Another possible extension of our
work is for systems of random multi-homogeneous polynomials, in which
case one needs to consider the zero set restricted to a product of
spheres. For multi-homogeneous systems and other related sparse systems,
the expected number of solutions was computed by Rojas \cite{Rojas}
and McLennan \cite{McLennan}. We believe that our methods can be
extended to show vanishing of the variance in both these settings
as well. This will be explored in future work. 

Deciding whether a \emph{deterministic} system of polynomials is solvable
is computationally hard if one lets the number of equations and variables
go to infinity. Any system can be reduced to a quadratic system by
adding extra variables and equations. Barvinok and Rudelson \cite{RudelsonBarvinok}
recently provided a sufficient condition for solvability of quadratic
systems. In Section 1.4 of their work, they compute the threshold
for the number of equations in a random setting up to which their
algorithm is able to detect that a solution exist in the quadratic
case. Our main result implies for large enough $N=\K$ for any system
$\Xi$ there exist many solutions, and suggests random systems as
a benchmark test for algorithms. In particular, finding a natural
algorithm for deterministic systems which is able to detect a solution
with high probability for any system $\Xi$ (say of bounded degree
uniformly in $N$) is an open problem.

Finally, we mention that in theoretical physics, random functions
$f_{N}(\bx)$ as in (\ref{eq:random_poly}) on the sphere are called
spherical mixed $p$-spin spin glass Hamiltonians (up to normalization).
 The Kac-Rice formula
was applied in the context of spherical spin glasses to compute the
expected number of critical points, see the works of Auffinger, Ben
Arous and {\v{C}}ern{\'y} \cite{ABA2,A-BA-C} and Fyodorov \cite{Fyo1}.
It was also used to prove its concentration by a second moment argument
\cite{2nd,SubagZeitouniConcentration} and to study extremely
large critical values by the author and Zeitouni \cite{pspinext}
and analyze the Gibbs measure by the author in \cite{geometryGibbs}
and with Ben Arous and Zeitouni in \cite{geometryMixed}.

In the next section we prove Theorem \ref{thm:1stmoment}.  In Section \ref{sec:Second-moment-log-scale},
also starting from the Kac-Rice formula, we first prove that the variance
as in (\ref{eq:momentsmatching}) is dominated by the contribution
coming from pairs of points on the sphere which are approximately
orthogonal. In Section \ref{sec:Matching-of-moments} we complete
the proof of Theorem \ref{thm:2ndmoment-O(1)} by a refined analysis
of the latter.

\section{\label{sec:1stmoment}First moment: proof of Theorem \ref{thm:1stmoment}}

Let $(E_{i})_{i\leq N}=(E_{i}(\bx))_{i\leq N}$ be some orthonormal
frame field on $\SN$. That is, for any $\bx\in\SN$, $E_{i}(\bx)$
is an orthonormal basis of the tangent space $T_{\bx}\SN$. For any
differentiable function $h:\SN\to\R$, $E_{i}h(\bx)$ are the corresponding
directional derivatives of $h(\bx)$. Denote $\nabla f_{N}^{(k)}(\bx):=(E_{i}f_{N}^{(k)}(\bx))_{i\leq N}$.
By an abuse of notation, define the vector valued random field $\bx\mapsto\vec{f}_{\K}(\bx):=(f_{N}^{(k)}(\bx))_{k=1}^{\M}$
and the matrix $\nabla\vec{f}_{\K}(\bx)\in\R^{\M\times N}$ by
\[
(\nabla\vec{f}_{\K}(\bx))_{kl}:=E_{l}f_{N}^{(k)}(\bx).
\]
By a variant of the Kac-Rice formula (see remark below) 
\begin{align}
\E\sol_{N,\K} & =\int_{\SN}\E\left[\J(\nabla\vec{f}_{\K}(\bx))\,\Big|\,\vec{f}_{\K}(\bx)=0\right]p_{\vec{f}_{\K}(\bx)}(0)d\HaussN(\bx),\label{eq:KR1stmoment1}
\end{align}
where $J(A)=\sqrt{\det AA^{T}}$, $\HaussN$ is the $N$-dimensional
Hausdorff measure on $\SN$ and $p_{\vec{f}_{\K}}(0)$ is the Gaussian
density of the vector $\vec{f}_{\K}(\bx)$ evaluated at the origin.
From independence 
\[
p_{\vec{f}_{\K}(\bx)}(0)=\prod_{k=1}^{\M}p_{f_{N}^{(k)}(\bx)}(0)=\prod_{k=1}^{\M}\frac{1}{\sqrt{2\pi\xi_{N,k}(1)}},
\]
where $p_{f_{N}^{(k)}(\bx)}(t)$ is the density of the centered Gaussian
variable $f_{N}^{(k)}(\bx)$ with variance $\xi_{N,k}(1)$. 
\begin{rem}
\label{rem:regularity}In our setting we can use the variant of the
Kac-Rice formula in Theorem 6.8 of \cite{AzaisWscheborBook}. It
cannot be applied directly, since there the parameter space of the
Gaussian process in the theorem is an open subset of $\R^{N+1}$ while we consider
the zero set on the unit sphere $\SN$. However, we may instead use
the process $\vec{g}_{\K}(\bx)=\vec{f}_{\K}(\bx/\|\bx\|)$ on a spherical
shell of width $\epsilon>0$ in $\R^{N+1}$. Each zero of $\vec{f}_{\K}(\bx)$
in $\SN$ then corresponds to a fiber of length $\epsilon$ of the
zero set of $\vec{g}_{\K}(\bx)$ in the shell, using which one can
easily verify (\ref{eq:KR1stmoment1}) from (6.27) of \cite{AzaisWscheborBook}.
Conditions (i)-(iii) in Theorem 6.8 of \cite{AzaisWscheborBook} follow easily from the definition
of $\vec{f}_{\K}(\bx)$. To verify condition (iv), it is enough to
prove the same condition with the Jacobian matrix of $\vec{g}_{\K}(\bx)$
replaced by $\nabla\vec{f}_{\K}(\bx)$ (since the rank of the former
is at least that of the latter). This condition follows from Proposition
6.12 of \cite{AzaisWscheborBook} and Lemma \ref{lem:gradient} below.
\end{rem}

\begin{lem}
\label{lem:gradient}For any $\bx\in\SN$, the elements $E_{l}f_{N}^{(k)}(\bx)$
of $\nabla\vec{f}_{\K}(\bx)$ are independent variables with distribution
$\N(0,\xi_{N,k}'(1))$ and $\nabla\vec{f}_{\K}(\bx)$ is independent
of $\vec{f}_{\K}(\bx)$.
\end{lem}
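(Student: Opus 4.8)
The plan is to compute the law of the pair $(\vec f_{\K}(\bx), \nabla\vec f_{\K}(\bx))$ directly by differentiating the covariance, using the rotational invariance to reduce to a convenient point. Since the $\K$ polynomials $f_N^{(k)}$ are independent, it suffices to treat a single copy $f=f_N^{(k)}$ with $\E f(\bx)f(\by)=\xi(\bx\cdot\by)$, $\xi=\xi_{N,k}$, and show that $f(\bx)$ is independent of its spherical gradient $\nabla f(\bx)=(E_i f(\bx))_{i\le N}$, with the latter having i.i.d.\ $\N(0,\xi'(1))$ entries; independence across $k$ then gives the joint statement, and the matrix $\nabla\vec f_\K(\bx)$ has the claimed independent Gaussian entries.

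First I would fix $\bx\in\SN$ and, by the rotational invariance of the law of $f$, assume without loss of generality that $\bx=\e_0=(1,0,\dots,0)$, and choose the frame field near $\e_0$ so that $E_i(\e_0)=\e_i$ for $i=1,\dots,N$ (the standard coordinate directions), which are tangent to $\SN$ at $\e_0$. The key computation is then to differentiate $R(\bx,\by):=\E f(\bx)f(\by)=\xi(\bx\cdot\by)$ under the expectation. For the covariance of value and gradient, $\E\big[f(\e_0)\,E_i f(\e_0)\big]=\partial_{y_i}\big|_{\by=\e_0}\xi(\e_0\cdot\by)=\xi'(1)\,(\e_0)_i=0$ for $i\ge1$, since the $i$-th coordinate of $\e_0$ vanishes; hence $f(\bx)$ and $\nabla f(\bx)$ are uncorrelated, and being jointly Gaussian they are independent. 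For the covariance among gradient entries, $\E\big[E_i f(\e_0)\,E_j f(\e_0)\big]=\partial_{x_i}\partial_{y_j}\big|_{\bx=\by=\e_0}\xi(\bx\cdot\by)=\xi''(1)(\e_0)_i(\e_0)_j+\xi'(1)\delta_{ij}=\xi'(1)\delta_{ij}$ for $i,j\ge1$, again because $(\e_0)_i=0$. This gives that the $E_i f(\e_0)$ are i.i.d.\ $\N(0,\xi'(1))$. Finally, independence of the $f_N^{(k)}$ across $k$ is assumed in the model, so the rows of $\nabla\vec f_\K(\bx)$ are independent, with row $k$ having i.i.d.\ $\N(0,\xi_{N,k}'(1))$ entries, and the whole matrix is independent of $\vec f_\K(\bx)=(f_N^{(k)}(\bx))_k$.

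There is one technical point worth making explicit rather than an essential obstacle: the frame field $E_i$ is only defined locally and cannot be chosen to coincide with fixed coordinate directions in a neighborhood, so one must be slightly careful that differentiating $\by\mapsto\xi(\bx\cdot\by)$ along the frame and evaluating the resulting field of Gaussian vectors at $\e_0$ is legitimate. This is handled by noting that $E_i f(\bx)$ depends only on the $1$-jet of $f$ at $\bx$, so only the values $E_i(\e_0)=\e_i$ enter the covariance computation at the single point $\e_0$; differentiability and the exchange of $\E$ with $\partial$ are justified by the smoothness of $\xi$ and the fact that $f$ is an a.s.\ smooth Gaussian field with polynomially bounded derivatives (e.g.\ via dominated convergence, or by observing $f$ and its derivatives lie in a fixed finite-dimensional Gaussian space). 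Since the conclusion is a statement about the covariance structure at one point, the local-versus-global distinction for the frame is immaterial. The main ``work'' is thus the two-line derivative computation above; everything else is bookkeeping and an appeal to the fact that jointly Gaussian uncorrelated vectors are independent.
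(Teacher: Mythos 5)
Your proposal is correct and follows essentially the same approach as the paper: both compute the joint covariance structure of $f_N^{(k)}(\bx)$ and the entries of $\nabla f_N^{(k)}(\bx)$ by differentiating the covariance function $\xi(\bx\cdot\by)$ and then appeal to joint Gaussianity to upgrade uncorrelatedness to independence. The only cosmetic difference is that the paper works with general ambient directional derivatives $\tfrac{d}{d\bv}$ and then specializes to orthonormal tangent directions, whereas you fix $\bx=\e_0$ by rotational invariance and take partials in coordinates; the covariance identities obtained are identical.
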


\begin{proof}
Since all variables are jointly Gaussian, the lemma follows by a computation
of covariances. Such calculation was carried out e.g. in \cite{ABA2},
but for completeness we provide a quick proof. For $\bv\in\R^{N+1}$
denote by $\frac{d}{d\bv}$ the corresponding directional derivative.
Let $f_{N}(\bx)$ be the centered Gaussian process as in (\ref{eq:random_poly})
with covariance function $\E f_{N}(\bx)f_{N}(\by)=\xi(\bx\cdot\by)$
for $\xi(t)=\sum_{p=2}^{d}a_{p}^{2}t^{p}$. Then, 
\begin{align*}
\E\frac{d}{d\bv}f_{N}(\bx)f_{N}(\bx') & =\frac{d}{d\bv}\xi(\bx\cdot\bx')=\xi'(\bx\cdot\bx')\bv\cdot\bx',\\
\E\frac{d}{d\bv}f_{N}(\bx)\frac{d}{d\bv'}f_{N}(\bx') & =\frac{d}{d\bv}\frac{d}{d\bv'}\xi(\bx\cdot\bx')=\xi'(\bx\cdot\bx')\bv\cdot\bv'+\xi''(\bx\cdot\bx')\left(\bv\cdot\bx'\right)\left(\bv'\cdot\bx\right).
\end{align*}

With $\bx=\bx'$, the first equation implies the independence of $\nabla\vec{f}_{\K}(\bx)$
and $\vec{f}_{\K}(\bx)$. For orthogonal unit vectors $\bv$, $\bv'$,
$\bx$, from the second equation with $\bx=\bx'$ the elements of
$\nabla\vec{f}_{\K}(\bx)$ are independent of each other. From the
same equation with $\bx=\bx'$ and $\bv=\bv'$ and $\bv$, $\bx$
orthogonal unit vectors, the variance of each element $E_{l}f_{\K}^{(k)}(\bx)$
of $\nabla\vec{f}_{\K}(\bx)$ is $\xi_{N,k}'(1)$.
\end{proof}
Combining the above with (\ref{eq:KR1stmoment1}), we have that for
arbitrary $\bx\in\SN$,
\begin{align}
\E\sol_{N,\K} & =\V(\SN)\E\J(\bM(\bx))\left(\frac{1}{2\pi}\right)^{\frac{\M}{2}}\prod_{k=1}^{\M}\sqrt{\frac{\xi_{N,k}'(1)}{\xi_{N,k}(1)}},\label{eq:KR1stmoment2}
\end{align}
where $\V(\SN)=\HaussN(\SN)$ is the volume of the unit sphere and
$\bM(\bx)=\bM_{N,\K}(\bx)$ is a $\K\times N$ matrix with i.i.d.
$\N(0,1)$ elements defined by
\begin{equation}
(\bM(\bx))_{kl}:=\frac{(\nabla\vec{f}_{\K}(\bx))_{kl}}{\sqrt{\xi_{N,k}'(1)}}=\frac{E_{l}f_{N}^{(k)}(\bx)}{\sqrt{\xi_{N,k}'(1)}}.\label{eq:Mx}
\end{equation}

In (\ref{eq:random_poly}) we start the summation from $p=2$, which
will be important in the proof of Theorem \ref{thm:2ndmoment-O(1)}.
However, everything in the current proof works also for $\xi_{N,k}(t)=\xi(t)=t$
with $f_{N}^{(k)}(\bx)=\bx\cdot G^{(k)}$ where $G^{(k)}$ are i.i.d.
Gaussian vectors whose elements are i.i.d. $\N(0,1)$ variables. For
this choice, $\sol_{N,\K}$ is the volume of the set of points on
the sphere which are orthogonal to all the vector $(G^{(k)})_{k=1}^{\K}$
and thus $\sol_{N,\K}=\V(\SNK)$ almost surely. It follows that 
\begin{equation}
\V(\SN)\E\J(\bM(\bx))\left(\frac{1}{2\pi}\right)^{\frac{\K}{2}}=\V(\SNK).\label{eq:MNdet}
\end{equation}
Since the law of $\bM(\bx)$ is independent of $\Xi$, by plugging this into
(\ref{eq:KR1stmoment2}), (\ref{eq:1stmoment}) follows for general
$\Xi$.\qed

\section{\label{sec:Second-moment-log-scale}Second moment on logarithmic
scale}

Given $I\subset[-1,1]$, consider the set
\[
\left\{ (\bx,\by)\in\SN\times\SN:\,\bx\cdot\by\in I,\,\vec{f}_{\K}(\bx)=\vec{f}_{\K}(\by)=0\right\} .
\]
If $\K=N$ define the random variable $\sol_{N,\K}^{(2)}(I)=\sol_{N,\K}^{(2)}(\Xi,I)$
as the cardinality of this set, and if $\K<N$ define it as the $2(N-\K)$-dimensional
Hausdorff measure of it. 

In this section we apply (a variant of) the Kac-Rice formula to $\sol_{N,\K}^{(2)}(I)$
and prove the following theorem weaker than Theorem \ref{thm:2ndmoment-O(1)}.
\begin{thm}
\label{thm:2ndmoment-log}Let $\alpha\in(0,\log 2)$
and $\epsilon>0$ and denote $I(\epsilon)=[-1,1]\setminus[-\epsilon,\epsilon]$.
Then, 
\begin{align}
\limsup_{N\to\infty}\sup_{\K\leq N}\sup_{\Xi}\frac{1}{N}\log\frac{\E\sol_{N,\K}^{(2)}(\Xi,I(\epsilon))}{(\E\sol_{N,\K}(\Xi))^{2}} & <0,\label{eq:2ndmoment-log}
\end{align}
where the supremum is either over all $\Xi$ such that $\max_{k\leq N}\deg(\xi_{N,k})\leq e^{e^{N\alpha}}$ or over $\mathscr{H}$.
\end{thm}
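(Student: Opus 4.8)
The plan is to apply the Kac--Rice formula to the second moment $\E\sol_{N,\K}^{(2)}(\Xi,I(\epsilon))$ exactly as in Section~\ref{sec:1stmoment}, but now for the pair process $(\vec f_\K(\bx),\vec f_\K(\by))$ over pairs $(\bx,\by)\in\SN\times\SN$ with $\bx\cdot\by=t\in I(\epsilon)$. This gives an integral, over $t$, of a conditional expectation of a Jacobian-type factor $\E\big[\J(\nabla\vec f_\K(\bx))\J(\nabla\vec f_\K(\by))\mid \vec f_\K(\bx)=\vec f_\K(\by)=0\big]$ times the joint Gaussian density of $(\vec f_\K(\bx),\vec f_\K(\by))$ at the origin, integrated against the appropriate Hausdorff measure on the set $\{(\bx,\by):\bx\cdot\by=t\}$. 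By rotational invariance all the inner quantities depend on $\bx,\by$ only through $t$, so the geometric integral over that set just contributes an explicit volume factor (a product of sphere volumes times a Jacobian coming from the coarea change of variables to the coordinate $t$), which is controlled by standard estimates on $\V(\SN)$ and is harmless on the exponential scale. Dividing by $(\E\sol_{N,\K})^2$ from Theorem~\ref{thm:1stmoment}, the task reduces to showing that
\[
\sup_{\K\le N}\ \sup_\Xi\ \sup_{t\in I(\epsilon)}\ \frac1N\log\Big(\text{density ratio}(t)\cdot\text{Jacobian ratio}(t)\Big)
\]
is bounded above by a strictly negative constant for all large $N$, uniformly in the stated classes.

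The key computations are: (1) the \emph{density factor}. The vector $(\vec f_\K(\bx),\vec f_\K(\by))$ has, coordinatewise in $k$, covariance matrix $\xi_{N,k}(1)\big(\begin{smallmatrix}1 & \rho_k\\ \rho_k & 1\end{smallmatrix}\big)$ where $\rho_k=\xi_{N,k}(t)/\xi_{N,k}(1)$; its density at $0$ is $\prod_k \big(2\pi\xi_{N,k}(1)\sqrt{1-\rho_k^2}\big)^{-1}$. Comparing with $p_{\vec f_\K(\bx)}(0)^2=\prod_k(2\pi\xi_{N,k}(1))^{-1}$, the ratio of densities is $\prod_{k=1}^\K (1-\rho_k^2)^{-1/2}$. (2) the \emph{Jacobian factor}. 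Conditioning on $\vec f_\K(\bx)=\vec f_\K(\by)=0$ and using Lemma~\ref{lem:gradient}-type covariance calculations (now at two distinct points), the $2\K\times 2N$ conditional gradient matrix is Gaussian with a covariance structure depending only on $t$ and the derivatives $\xi_{N,k}^{(j)}(t)$, $\xi_{N,k}^{(j)}(1)$; one bounds $\E[\J\cdot\J\mid\cdots]$ above by $\prod_k$ of a factor built from the conditional variances of the gradient entries, and divides by $\big(\E\J(\bM(\bx))\big)^2$. The point is that each per-$k$ factor is a bounded function of $\rho_k$ (and of $t\xi'_{N,k}(t)/\xi'_{N,k}(1)$ etc.) which equals $1$ at $\rho_k=0$ and is controlled by, say, $(1-\rho_k^2)^{-C}$ uniformly. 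Hence on the log scale, the whole ratio is at most $-c\sum_{k=1}^\K \log(1-\rho_k^2) + O(\log N)$ (the $O(\log N)$ absorbing the sphere-volume Jacobian). Since $\log\frac1{1-\rho_k^2}\ge 0$, the \emph{worst case} is $\rho_k$ as close to $\pm1$ as possible; but the crucial constraint is that for $t\in I(\epsilon)$, i.e. $|t|\le 1-$ away from the endpoints is \emph{not} what we have --- rather $|t|\ge\epsilon$, so $\rho_k$ can be close to $1$ when $t$ is close to $1$. This is where the degree enters.

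The main obstacle, and the heart of the argument, is the regime $t\to\pm1$. There $\rho_k=\xi_{N,k}(t)/\xi_{N,k}(1)\to 1$, so $(1-\rho_k^2)^{-1/2}$ blows up, and with $\K=N$ factors this threatens to overwhelm any fixed negative contribution. One must show that near $t=\pm1$ a competing \emph{negative} exponential term dominates. The source of that negative term is the geometric/volume factor: the set of pairs $(\bx,\by)$ with $\bx\cdot\by=t$ has, on the sphere, a Hausdorff-measure density proportional to $(1-t^2)^{(N-1)/2}$ (the ``latitude sphere'' of radius $\sqrt{1-t^2}$), contributing $\tfrac{N}{2}\log(1-t^2)\to-\infty$ as $t\to\pm1$. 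The quantitative fight is then between $\tfrac N2\log(1-t^2)$ (negative, order $N$) and $-\tfrac12\sum_k\log(1-\rho_k^2)+(\text{bounded Jacobian corrections})$. Writing $s=1-t^2$ small, one has $1-\rho_k\ge c\,(1-t)\ge c' s$ for homogeneous $\xi_{N,k}(t)=t^{d}$ only if $d$ is not too large: in fact $1-t^{d}\asymp d(1-t)$ when $d(1-t)\lesssim 1$ but $1-t^d$ is merely $\le 1$, and $1-\rho_k^2\ge 1-\rho_k \ge 1-t^d$; the danger is $\sum_k \log\frac{1}{1-t^{d_k}}$, which for $t$ extremely close to $1$ and $d_k$ huge is only bounded by $\sum_k\log\frac{1}{1-t}$ plus $\sum_k\log d_k$ --- wait, more carefully $1-t^{d}\ge (1-t)$, always, so $\log\frac1{1-t^d}\le\log\frac1{1-t}$, giving $\sum_k\le N\log\frac1{1-t}=N\log\frac1{1-t}$, which is \emph{comparable} to $\tfrac N2\log s\approx N\log(1-t)$ (same order!). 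So the constants must be beaten: one needs the coefficient of $\log(1-t)$ from the volume term ($\tfrac N2$, since $\log s = \log(1-t)+\log(1+t)\approx\log(1-t)+\log 2$) to strictly exceed that from the density/Jacobian blow-up. A careful accounting of the $\log 2$ terms — there are $\approx N$ of them, contributing $+\tfrac N2\log 2$ from $\log(1+t)$ in the volume but potentially being eaten on the density side — is exactly what produces the threshold $\alpha<\log 2$ on the degree (the degree bound $\deg\le e^{e^{N\alpha}}$ controls $\sum_k\log d_k\le N\,e^{N\alpha}$, which must be $o(N\cdot\text{something})$... actually it controls the a~priori largeness needed to separate $t$ from $1$ on a grid). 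I would therefore: first handle $t$ bounded away from both $0$ and $\pm1$ (easy, every factor uniformly bounded, ratio $e^{-cN(1+o(1))}$); then, for $t\in[1-\delta_N,1]$, split further using that $\rho_k\le t$ is impossible to improve in general but $1-\rho_k^2\ge 1-t^2=s$ crudely, bound $-\tfrac12\sum_k\log(1-\rho_k^2)\le \tfrac N2\log\frac1s$, and against it use \emph{two} copies of the volume factor $(1-t^2)^{(N-1)/2}$ appearing — once for $\bx$ on the full sphere, once for $\by$ on the latitude sphere — no: only one latitude factor. Hmm. The genuine resolution is that the Jacobian factor also \emph{decays} like a power of $s$ near $t=1$ (the gradients at $\bx$ and $\by$ become nearly linearly dependent, forcing $\J(\nabla\vec f_\K(\by))$ to be small conditionally on $\vec f_\K(\bx)=\vec f_\K(\by)=0$ and $\nabla\vec f_\K(\bx)$), contributing another $\tfrac N2\log s$ or so; combined with the volume's $\tfrac N2\log s$ this beats the density's $\tfrac12\sum_k\log\frac1{1-\rho_k^2}$ provided $1-\rho_k^2$ doesn't go to zero faster than $s^{1-o(1)}$, which is guaranteed precisely by $\deg\xi_{N,k}\le e^{e^{N\alpha}}$ with $\alpha<\log 2$ (for $\mathscr H$ one uses $1-t^d\ge 1-t\ge s/2$ and the degree-free structure). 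I expect the bookkeeping of these competing $\Theta(N)$ terms, getting the constant strictly below zero uniformly, to be the technical crux; it is deferred in the paper to the detailed estimates following this statement.
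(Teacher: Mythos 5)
Your high-level strategy matches the paper's: apply Kac--Rice to the pair process, reduce to a one-dimensional integral in $r=\bx\cdot\by$, and balance three competing factors (latitude volume, density of $(\vec f_\K(\bx),\vec f_\K(\by))$ at $0$, conditional Jacobian). But the specific quantitative ingredients you leave as ``bookkeeping'' are exactly where the theorem lives, and on several of them your sketch is either missing the key observation or asserts something incorrect.

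\textbf{The density factor is not dangerous once you use $\deg\ge 2$.} You work with the raw ratio $\prod_k(1-\rho_k^2)^{-1/2}$ and worry about it blowing up near $r=\pm1$. The paper instead combines $K$ powers of the latitude factor $(1-r^2)^{1/2}$ with the density to get, for each $k$,
\[
\frac{\xi_{N,k}'(1)^2(1-r^2)}{\xi_{N,k}(1)^2-\xi_{N,k}(r)^2}
= \frac{\xi_{N,k}'(1)^2}{\xi_{N,k}(1)^2}\cdot\frac{1-r^2}{1-\nu_k(r)^2}
\le \frac{\xi_{N,k}'(1)^2}{\xi_{N,k}(1)^2}\cdot\frac{1}{1+r^2},
\]
where the last inequality is exactly where the hypothesis that the polynomials start at degree $p\ge 2$ (so $|\nu_k(r)|\le r^2$ and hence $1-\nu_k(r)^2\ge 1-r^4$) is used. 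Your bound $1-\rho_k^2\ge 1-t^2$ is correct but strictly weaker; it loses the $(1+r^2)^{-K/2}\le 2^{-K/2}$ decay near $r=\pm1$, which is the clean negative exponential that makes the whole argument work. Without this normalization you are left fighting $\Theta(N)\cdot\log\tfrac1s$ against $\Theta(N)\cdot\log s$ with uncontrolled constants, which is precisely the confusion you describe.

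\textbf{Your description of the Jacobian decay is quantitatively wrong.} You claim the conditional Jacobian ``contributes another $\tfrac N2\log s$ or so'' near $t=1$. It does not. The conditional second moment of the Jacobian satisfies $\E[\J(\tilde\bM)^2]\le\frac{N!}{(N-K)!}\Phi_{N,K}(r)$ with $\Phi_{N,K}(r)\in[0,1]$ and $\Phi_{N,K}(1)=1-K/N$; for $K=N$, $\Phi_{N,N}(r)$ vanishes at $r=1$, but only like $(1-r)$, not like $(1-r)^{N/2}$. Its only role is to tame the single $(1-r^2)^{-1}$ singularity left over from the volume factor when $K=N$. The degree bound then enters through the integral estimate $\int_{-1+\epsilon}^1 \Phi_{N,N}(r)/(1-r^2)\,dr\le C\epsilon^{-1}\log(\max_k\deg\xi_{N,k})$, so the $\log\log\deg$ must beat the $\tfrac12\log(1+(1-4\epsilon)^2)\to\tfrac12\log 2$ rate, giving the threshold on $\alpha$. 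If the Jacobian really contributed a full extra $\tfrac N2\log s$ as you claim, no degree restriction would be needed at all.

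\textbf{The $t\to-1$ regime is not addressed.} When $K=N$ the bound $\int\Phi/(1-r^2)\,dr\le C\epsilon^{-1}\log d$ degrades as $\epsilon\to0$, so it cannot be used uniformly all the way to $r=-1$. The paper resolves this by a deterministic counting argument (Lemma \ref{lem:negativeoverlaps}): pairs of zeros with $\bx\cdot\by\le-1+\epsilon$ can be injected, up to a factor $2$, into pairs with $\bx\cdot\by\ge1-4\epsilon$. Your sketch treats $t\approx+1$ and essentially ignores $t\approx-1$; for non-homogeneous $\xi$ the two are not symmetric, so this is a genuine gap. Similarly the homogeneous class $\mathscr H$ gets its own degree-free bound (Corollary \ref{cor:homogeneous}) which you do not reproduce.

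\end{document}
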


Consider the random field $(\bx,\by)\to(\vec{f}_{\K}(\bx),\vec{f}_{\K}(\by))$
on 
\[
T(I):=\left\{ (\bx,\by)\in\SN\times\SN:\,\bx\cdot\by\in I\right\} .
\]
$\sol_{N,\K}^{(2)}(I)$ is the number or volume of pairs $(\bx,\by)$
at which this random field is equal to $0\in\R^{2\K}$. Hence, for
an interval $I\subset(-1,1)$, an application of a variant of the
Kac-Rice formula yields,\footnote{This follows
by a similar argument to Remark \ref{rem:regularity} with Lemma \ref{lem:gradient}
replaced by (\ref{eq:2grads}).} 
\begin{equation}
\begin{aligned}\mathbb{E}\sol_{N,\K}^{(2)}(I) & =\prod_{k=1}^{\K}\xi_{N,k}'(1)\int_{T(I)}p_{\vec{f}_{\K}(\bx),\vec{f}_{\K}(\by)}(0)D(\bx\cdot\by)d\HaussN\times\HaussN,\end{aligned}
\label{eq:KR1}
\end{equation}
where for arbitrary points $\bx,\by\in\SN$ with $r=\bx\cdot\by$
we define\footnote{Note that $D(r)$ depends on the choice of $\bx$ and $\by$ only
through $\bx\cdot\by=r$ and does not depend on the choice of the
orthonormal frame field $E_{i}$.}
\begin{equation}
D(r):=\E\left[\J(\bM(\bx))\J(\bM(\by))\,\bigg|\,\vec{f}_{\K}(\bx)=\vec{f}_{\K}(\by)=0\right]\label{eq:D(r)}
\end{equation}
with $\bM(\bx)=\bM_{N,\K}(\bx)$  as defined in (\ref{eq:Mx}) and
where
\[
p_{\vec{f}_{\K}(\bx),\vec{f}_{\K}(\by)}(0)=\prod_{k=1}^{\K}\det(2\pi\Sigma_{N,k}(r))^{-\frac{1}{2}}=(2\pi)^{-\K}\prod_{k=1}^{\K}(\xi_{N,k}(1)^{2}-\xi_{N,k}(r)^{2})^{-\frac{1}{2}}
\]
is the density of $(\vec{f}_{\K}(\bx),\vec{f}_{\K}(\by))$ at the
origin and 
\[
\Sigma_{N,k}(r)=\left(\begin{array}{cc}
\xi_{N,k}(1) & \xi_{N,k}(r)\\
\xi_{N,k}(r) & \xi_{N,k}(1)
\end{array}\right).
\]

Since the integrand in (\ref{eq:KR1}) only depends on $\bx\cdot\by$,
for an arbitrary $\bx\in\SN$,
\[
\mathbb{E}\sol_{N,\K}^{(2)}(I)=\V(\SN)\prod_{k=1}^{\K}\xi_{N,k}'(1)\int_{\{\by\in\SN:\,\bx\cdot\by\in I\}}p_{\vec{f}_{\K}(\bx),\vec{f}_{\K}(\by)}(0)D(\bx\cdot\by)d\HaussN(\by).
\]
Next, using the co-area formula with the function $\rho(\by)=\bx\cdot\by$,
we may express the integral w.r.t. $\by$ as a one-dimensional integral
over a parameter $r$ (the volume of the inverse-image $\rho^{-1}(r)$
and the inverse of the Jacobian are given by $\V(\SNt)(1-r^{2})^{\frac{N-1}{2}}$
and $(1-r^{2})^{-\frac{1}{2}}$, respectively). This yields 
\begin{equation}
\begin{aligned}\mathbb{E}\sol_{N,\K}^{(2)}(I) & =C_{N,\K}\int_{I}\prod_{k=1}^{\K}\left(\frac{\xi_{N,k}'(1)^{2}}{\xi_{N,k}(1)^{2}-\xi_{N,k}(r)^{2}}\right)^{\frac{1}{2}}(1-r^{2})^{\frac{N-2}{2}}D(r)dr,\end{aligned}
\label{eq:KR2}
\end{equation}
where
\begin{equation}
C_{N,\K}=\V(\SN)\V(\SNt)(2\pi)^{-\K}.\label{eq:CN}
\end{equation}
We will prove Theorem \ref{thm:2ndmoment-log} in Subsection \ref{subsec:pf_thm_2ndmoment-log}
below, after we prove several auxiliary results.

We recall that $f_{N}(\bx)$ is defined in (\ref{eq:random_poly})
and that $\E f_{N}(\bx)f_{N}(\by)=\xi(\bx\cdot\by)$ for $\xi(t)=\sum_{p=2}^{d}a_{p}^{2}t^{p}$. 
\begin{lem}
\label{lem:conditionalGradient}Let $\bx,\by\in\SN$ be two points
with $\bx\cdot\by=r\in(-1,1)$. Denote by $\E_{0}$ expectation conditional
on $f_{N}(\bx)=f_{N}(\by)=0$. Then there exists a choice of the frame
field $(E_{i})$ for which, conditional on $f_{N}(\bx)=f_{N}(\by)=0$,
$\nabla f_{N}(\bx)$ and $\nabla f_{N}(\by)$ are centered jointly
Gaussian matrices such that for any $i\neq j$ and $\bz_{1},\bz_{2}\in\{\bx,\by\}$,
\[
\E_{0}\left\{ E_{i}f_{N}(\bz_{1})E_{j}f_{N}(\bz_{2})\right\} =0,
\]
and for any $i$,
\begin{align*}
\E_{0}\left\{ E_{i}f_{N}(\bx)E_{i}f_{N}(\bx)\right\}  & =\E_{0}\left\{ E_{i}f_{N}(\by)E_{i}f_{N}(\by)\right\} \\
 & =\begin{cases}
\xi'(1) & i<N\\
\xi'(1)-\frac{\xi'(r)^{2}(1-r^{2})}{\xi(1)^{2}-\xi(r)^{2}}\xi(1) & i=N
\end{cases}
\end{align*}
and 
\begin{align*}
\E_{0}\left\{ E_{i}f_{N}(\bx)E_{i}f_{N}(\by)\right\}  & =\begin{cases}
\xi'(r) & i<N\\
r\xi'(r)-\xi''(r)\left(1-r^{2}\right)-\frac{\xi'(r)^{2}(1-r^{2})}{\xi(1)^{2}-\xi(r)^{2}}\xi(r) & i=N.
\end{cases}
\end{align*}
\end{lem}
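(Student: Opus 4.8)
The plan is to write down a convenient explicit frame and then read the covariances off the ones already computed in the proof of Lemma~\ref{lem:gradient}. Since $r\in(-1,1)$, the points $\bx,\by$ are linearly independent; let $P=\mathrm{span}\{\bx,\by\}$, so that $P^{\perp}$ is $(N-1)$-dimensional and contained in both $\bx^{\perp}=T_{\bx}\SN$ and $\by^{\perp}=T_{\by}\SN$. Fix an orthonormal basis $\mathbf{w}_{1},\dots,\mathbf{w}_{N-1}$ of $P^{\perp}$ and set $E_{i}(\bx)=E_{i}(\by)=\mathbf{w}_{i}$ for $i<N$. To complete each tangent space one needs a unit vector of $P$ orthogonal to $\bx$, resp.\ to $\by$; I would take
\[
E_{N}(\bx)=\frac{\by-r\bx}{\sqrt{1-r^{2}}},\qquad E_{N}(\by)=\frac{r\by-\bx}{\sqrt{1-r^{2}}}.
\]
This is smooth in $(\bx,\by)$ wherever $r\in(-1,1)$, hence extends to a genuine orthonormal frame field; in any case the claim only concerns the values at $\bx$ and $\by$. (The common sign-flip of $E_{N}(\bx),E_{N}(\by)$ is equally admissible; the orientation above is the one that produces the sign of the off-diagonal entry stated in the lemma.)

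Next I would record the pairwise covariances of the centered Gaussian family $\{f_{N}(\bx),f_{N}(\by)\}\cup\{E_{i}f_{N}(\bx),E_{i}f_{N}(\by):i\le N\}$. Writing $\tfrac{d}{d\bv}$ for the directional derivative, the proof of Lemma~\ref{lem:gradient} supplies $\E\,\tfrac{d}{d\bv}f_{N}(\bz)\,f_{N}(\bz')=\xi'(\bz\cdot\bz')\,\bv\cdot\bz'$ and $\E\,\tfrac{d}{d\bv}f_{N}(\bz)\,\tfrac{d}{d\bv'}f_{N}(\bz')=\xi'(\bz\cdot\bz')\,\bv\cdot\bv'+\xi''(\bz\cdot\bz')(\bv\cdot\bz')(\bv'\cdot\bz)$. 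All that is needed are the inner products $\mathbf{w}_{i}\cdot\bx=\mathbf{w}_{i}\cdot\by=\mathbf{w}_{i}\cdot E_{N}(\bx)=\mathbf{w}_{i}\cdot E_{N}(\by)=0$, $E_{N}(\bx)\cdot\bx=E_{N}(\by)\cdot\by=0$, $E_{N}(\bx)\cdot\by=\sqrt{1-r^{2}}$, $E_{N}(\by)\cdot\bx=-\sqrt{1-r^{2}}$ and $E_{N}(\bx)\cdot E_{N}(\by)=r$, the last three following from $(\by-r\bx)\cdot(\bx-r\by)=-r(1-r^{2})$ and $(\by-r\bx)\cdot\by=(\bx-r\by)\cdot\bx=1-r^{2}$. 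Substituting, one finds that every $E_{i}f_{N}(\bz)$ with $i<N$ is uncorrelated with the pair $\big(f_{N}(\bx),f_{N}(\by)\big)$ and with every $E_{j}f_{N}(\bz')$, $j\neq i$, and that $\E E_{i}f_{N}(\bx)^{2}=\E E_{i}f_{N}(\by)^{2}=\xi'(1)$, $\E E_{i}f_{N}(\bx)E_{i}f_{N}(\by)=\xi'(r)$ for $i<N$; while the pair $U:=\big(E_{N}f_{N}(\bx),E_{N}f_{N}(\by)\big)$ has unconditional covariance matrix with both diagonal entries $\xi'(1)$ and both off-diagonal entries $r\xi'(r)-(1-r^{2})\xi''(r)$, and its cross-covariance with $V:=\big(f_{N}(\bx),f_{N}(\by)\big)$ is the antisymmetric matrix with zero diagonal and $\E E_{N}f_{N}(\bx)f_{N}(\by)=\xi'(r)\sqrt{1-r^{2}}=-\E E_{N}f_{N}(\by)f_{N}(\bx)$.

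Finally I would condition. Since the $i<N$ components are uncorrelated with $V$, their joint law is unchanged by conditioning on $V=0$, which at once gives $\E_{0}E_{i}f_{N}(\bx)^{2}=\E_{0}E_{i}f_{N}(\by)^{2}=\xi'(1)$, $\E_{0}E_{i}f_{N}(\bx)E_{i}f_{N}(\by)=\xi'(r)$ and the vanishing of all cross terms involving an index $<N$. For the block $U$, the Gaussian regression (Schur complement) identity gives $\mathrm{Cov}_{0}(U)=\mathrm{Cov}(U)-\mathrm{Cov}(U,V)\,\mathrm{Cov}(V)^{-1}\,\mathrm{Cov}(V,U)$ with $\mathrm{Cov}(V)^{-1}=(\xi(1)^{2}-\xi(r)^{2})^{-1}\bigl(\begin{smallmatrix}\xi(1)&-\xi(r)\\-\xi(r)&\xi(1)\end{smallmatrix}\bigr)$, and a short $2\times2$ multiplication shows the correction term equals $\tfrac{\xi'(r)^{2}(1-r^{2})}{\xi(1)^{2}-\xi(r)^{2}}\bigl(\begin{smallmatrix}\xi(1)&\xi(r)\\\xi(r)&\xi(1)\end{smallmatrix}\bigr)$; subtracting it from $\mathrm{Cov}(U)$ yields exactly the $i=N$ entries in the statement. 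There is no conceptual obstacle here; the only point requiring attention is tracking the signs in the choice of $E_{N}(\by)$ so that the off-diagonal conditional covariance comes out as $r\xi'(r)-\xi''(r)(1-r^{2})-\tfrac{\xi'(r)^{2}(1-r^{2})}{\xi(1)^{2}-\xi(r)^{2}}\xi(r)$ rather than its reflection.
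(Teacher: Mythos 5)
Your proof is correct and follows essentially the same route as the paper: choose a frame adapted to the pair $(\bx,\by)$ with the last direction lying in ${\rm span}\{\bx,\by\}$, read off the unconditional covariances from the directional-derivative identities $\E\,\tfrac{d}{d\bv}f_N(\bz)f_N(\bz')=\xi'(\bz\cdot\bz')\,\bv\cdot\bz'$ and $\E\,\tfrac{d}{d\bv}f_N(\bz)\tfrac{d}{d\bv'}f_N(\bz')=\xi'(\bz\cdot\bz')\,\bv\cdot\bv'+\xi''(\bz\cdot\bz')(\bv\cdot\bz')(\bv'\cdot\bz)$, and then apply the Gaussian regression (Schur complement) formula. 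The only difference is that you construct the frame and derive the unconditional covariance identities directly, whereas the paper imports exactly those facts from Lemma A.1 of the cited reference; your version is thus self-contained but not a different argument.
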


\begin{proof}
By rotational invariance, we may assume that $\bx=(0,\ldots,0,1)$
and $\by=(0,\ldots,\sqrt{1-r^{2}},r)$. By Lemma A.1 of \cite{geometryMixed},
there exists a choice of the frame field $(E_{i})$ such that
\begin{align*}
\mbox{\ensuremath{\mathbb{E}}}\left\{ f_{N}(\bx)E_{i}f_{N}(\bx)\right\}  & =\mbox{\ensuremath{\mathbb{E}}}\left\{ f_{N}(\by)E_{i}f_{N}(\by)\right\} =0,\\
\mbox{\ensuremath{\mathbb{E}}}\left\{ f_{N}(\by)E_{i}f_{N}(\bx))\right\}  & =-\mbox{\ensuremath{\mathbb{E}}}\left\{ f_{N}(\bx)E_{i}f_{N}(\by))\right\} =\xi'(r)\left(1-r^{2}\right)^{1/2}\delta_{i=N}
\end{align*}
and 
\begin{equation}
\begin{aligned}\mbox{\ensuremath{\mathbb{E}}}\left\{ E_{i}f_{N}(\bx)E_{j}f_{N}(\bx)\right\}  & =\mbox{\ensuremath{\mathbb{E}}}\left\{ E_{i}f_{N}(\by)E_{j}f_{N}(\by)\right\} =\xi'(1)\delta_{i=j},\\
\mbox{\ensuremath{\mathbb{E}}}\left\{ E_{i}f_{N}(\bx)E_{j}f_{N}(\by)\right\}  & =\left[r\xi'(r)-\xi''(r)\left(1-r^{2}\right)\right]\delta_{i=j=N}+\xi'(r)\delta_{i=j\neq N}.
\end{aligned}
\label{eq:2grads}
\end{equation}

Note that the inverse of the covariance matrix of $(f_{N}(\bx),f_{N}(\by)$)
is
\[
\Sigma^{-1}:=\left(\begin{array}{cc}
\xi(1) & \xi(r)\\
\xi(r) & \xi(1)
\end{array}\right)^{-1}=\frac{1}{\xi(1)^{2}-\xi(r)^{2}}\left(\begin{array}{cc}
\xi(1) & -\xi(r)\\
-\xi(r) & \xi(1)
\end{array}\right).
\]
The lemma thus follows from the well-known formulas for the conditional distribution of Gaussian
variables.
%
\end{proof}
Next we prove the following upper bound using (\ref{eq:KR2}) and
Lemma \ref{lem:conditionalGradient}.
\begin{lem}
\label{lem:general_bd}For any interval $I\subset(-1,1)$,
\begin{equation}
\begin{aligned}\mathbb{E}\sol_{N,\K}^{(2)}(I) & \leq C_{N,\K}\frac{N!}{(N-\K)!}\int_{I}\prod_{k=1}^{\K}\bigg(\frac{\xi_{N,k}'(1)^{2}(1-r^{2})}{\xi_{N,k}(1)^{2}-\xi_{N,k}(r)^{2}}\bigg)^{\frac{1}{2}}(1-r^{2})^{\frac{N-\K-2}{2}}\Phi_{N,\K}(r)dr,\end{aligned}
\label{eq:KR2-1}
\end{equation}
where $C_{N,\K}$ is defined in (\ref{eq:CN}) and $\Phi_{N,\K}(r)\in[0,1]$
is given by 
\begin{equation}
\begin{aligned}\Phi_{N,\K}(r) & :=\frac{1}{N}\bigg[N-K+\sum_{k=1}^{\K}\bigg(1-\frac{\xi_{N,k}(1)}{\xi_{N,k}'(1)}\frac{\xi_{N,k}'(r)^{2}(1-r^{2})}{\xi_{N,k}(1)^{2}-\xi_{N,k}(r)^{2}}\bigg)\bigg].\end{aligned}
\label{eq:Phi}
\end{equation}
\end{lem}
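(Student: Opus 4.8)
The plan is to reduce the lemma to the one-point estimate
\[
D(r)\ \leq\ \frac{N!}{(N-\K)!}\,\Phi_{N,\K}(r),
\]
from which (\ref{eq:KR2-1}) follows by substituting into (\ref{eq:KR2}): pulling a factor $(1-r^{2})^{\K/2}$ into the product over $k$ shows that the integrand of (\ref{eq:KR2-1}) is exactly that of (\ref{eq:KR2}) with $D(r)$ replaced by the right-hand side above.

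To prove the one-point estimate I would first decouple $\bx$ and $\by$. Let $\E_{0}$ denote expectation conditional on $\vec{f}_{\K}(\bx)=\vec{f}_{\K}(\by)=0$. By Cauchy--Schwarz, $D(r)\leq(\E_{0}\J(\bM(\bx))^{2})^{1/2}(\E_{0}\J(\bM(\by))^{2})^{1/2}$. By Lemma \ref{lem:conditionalGradient} the conditional covariance of $\nabla f_{N}(\bx)$ coincides with that of $\nabla f_{N}(\by)$, and since the $f_{N}^{(k)}$ are independent across $k$ (and the conditioning event factors over $k$) this persists for the matrices $\bM(\bx),\bM(\by)$; hence the two conditional expectations above are equal and $D(r)\leq\E_{0}\J(\bM(\bx))^{2}=\E_{0}\det(\bM(\bx)\bM(\bx)^{T})$. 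The same lemma shows that, conditionally, all entries of $\bM(\bx)$ are independent, with $(\bM(\bx))_{kl}\sim\N(0,1)$ for $l<N$ and $(\bM(\bx))_{kN}\sim\N(0,\sigma_{N,k}(r)^{2})$, where $\sigma_{N,k}(r)^{2}:=1-\frac{\xi_{N,k}(1)}{\xi_{N,k}'(1)}\cdot\frac{\xi_{N,k}'(r)^{2}(1-r^{2})}{\xi_{N,k}(1)^{2}-\xi_{N,k}(r)^{2}}$; since conditioning does not increase the variance of a Gaussian, $\sigma_{N,k}(r)^{2}\in[0,1]$, which gives $\Phi_{N,\K}(r)\in[0,1]$ because $\Phi_{N,\K}(r)=\frac{1}{N}\big[(N-\K)+\sum_{k}\sigma_{N,k}(r)^{2}\big]$.

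It then remains to compute $\E_{0}\det(\bM(\bx)\bM(\bx)^{T})$. Its columns $c_{1},\dots,c_{N}$ are independent, with $c_{l}\sim\N(0,I_{\K})$ for $l<N$ and $c_{N}\sim\N\big(0,\diag(\sigma_{N,1}(r)^{2},\dots,\sigma_{N,\K}(r)^{2})\big)$. By the Cauchy--Binet formula, $\det(\bM(\bx)\bM(\bx)^{T})=\sum_{S\subset[N],\,|S|=\K}\det(\bM(\bx)_{S})^{2}$, where $\bM(\bx)_{S}$ is the $\K\times\K$ submatrix on the columns in $S$. For any $\K\times\K$ matrix with independent columns of diagonal covariances $\Sigma_{1},\dots,\Sigma_{\K}$, the Leibniz expansion of the determinant gives $\E\det(\cdot)^{2}=\sum_{\pi\in\mathrm{Sym}(\K)}\prod_{a=1}^{\K}(\Sigma_{a})_{\pi(a)\pi(a)}$; hence $\E_{0}\det(\bM(\bx)_{S})^{2}$ equals $\K!$ if $N\notin S$ and $(\K-1)!\sum_{k=1}^{\K}\sigma_{N,k}(r)^{2}$ if $N\in S$. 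Summing over the $\binom{N-1}{\K}$ subsets of the first kind and the $\binom{N-1}{\K-1}$ of the second,
\[
\E_{0}\det(\bM(\bx)\bM(\bx)^{T})=\binom{N-1}{\K}\K!+\binom{N-1}{\K-1}(\K-1)!\sum_{k=1}^{\K}\sigma_{N,k}(r)^{2}=\frac{(N-1)!}{(N-\K)!}\Big[(N-\K)+\sum_{k=1}^{\K}\sigma_{N,k}(r)^{2}\Big],
\]
and the last expression equals $\frac{N!}{(N-\K)!}\Phi_{N,\K}(r)$, proving the estimate.

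I do not expect a serious obstacle. The two points needing care are: (i) the reduction to a single-point conditional expectation, which relies on the $\bx\leftrightarrow\by$ symmetry in Lemma \ref{lem:conditionalGradient} and is precisely what makes the diagonal column-covariance structure available; and (ii) the bookkeeping that collapses the two binomial terms into $\frac{N!}{(N-\K)!}\Phi_{N,\K}(r)$. The Cauchy--Schwarz step discards the $\bx$--$\by$ correlations, but this is harmless since $\Phi_{N,\K}(r)$ involves only one-point data, and the resulting bound is already strong enough for the proof of Theorem \ref{thm:2ndmoment-log}.
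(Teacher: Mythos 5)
Your proof is correct and takes the same reduction as the paper (Cauchy--Schwarz together with the $\bx\leftrightarrow\by$ symmetry from Lemma \ref{lem:conditionalGradient} reduces $D(r)$ to $\E_{0}\J(\bM(\bx))^{2}=\E_{0}\det(\bM(\bx)\bM(\bx)^{T})$, after which one substitutes into (\ref{eq:KR2})), but the computation of this conditional second moment is done by a genuinely different route. The paper applies an orthogonal matrix $\mathbf{Q}$, measurable in the last column $Z$ of the conditioned matrix $\tilde{\bM}$, to bring it to block form (\ref{eq:QM}), and then factorizes $\J(\tilde{\bM})$ in distribution as a product of independent Chi variables together with a $\sqrt{\|Z\|^{2}+\chi_{N-\K}^{2}}$ term, reading off the second moment. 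You instead invoke Cauchy--Binet to expand $\det(\bM\bM^{T})$ over $\K$-subsets $S\subset[N]$ and evaluate each $\E_{0}\det(\bM_{S})^{2}$ via the Leibniz expansion, using independence of columns and diagonality of the per-column covariance to kill all cross terms. Both computations are elementary, both in fact give the \emph{equality}
\[
\E_{0}\J(\bM(\bx))^{2}=\frac{(N-1)!}{(N-\K)!}\Big[(N-\K)+\sum_{k=1}^{\K}\lambda^{(k)}(r)\Big]=\frac{N!}{(N-\K)!}\Phi_{N,\K}(r),
\]
so the middle ``$\leq$'' in (\ref{eq:JMtilde}) is really an ``$=$''. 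Your route is perhaps a bit more self-contained and would adapt readily to other diagonal column covariances; the paper's Chi-factorization has the advantage of producing the distributional identity itself, which is reused in the proof of Lemma \ref{lem:Dbar}. Your justification of $\Phi_{N,\K}(r)\in[0,1]$ via the fact that Gaussian conditioning can only decrease variances is the right one and handles both endpoints cleanly.
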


\begin{proof}
Suppose that $\bx,\by\in\SN$ are two points such that $\bx\cdot\by=r$.
Recall that the $k$-th row of $\nabla\vec{f}_{\K}(\bx)$ is $\nabla f_{N}^{(k)}(\bx)$.
For different values of $k$, 
\[
\left(\nabla f_{N}^{(k)}(\bx),\nabla f_{N}^{(k)}(\by),f_{N}^{(k)}(\bx),f_{N}^{(k)}(\by)\right)
\]
are independent from each other. Let $\tilde{\bM}$ be a random matrix
whose law is defined as the conditional law of $\bM(\bx)$ (defined
in (\ref{eq:Mx})) given that$\vec{f}_{\K}(\bx)=\vec{f}_{\K}(\by)=0$.
By Lemma \ref{lem:conditionalGradient}, for an appropriate choice
of $E_{i}$, the $k$-th row of $\tilde{\bM}$ is equal in distribution
to
\begin{equation}
\big(\tilde{\bM}_{kj}\big)_{j=1}^{N}=\Big(W_{1},\ldots,W_{N-1,}\sqrt{\lambda^{(k)}(r)}W_{N}\Big),\label{eq:Mtilde-1}
\end{equation}
where $W_{j}$ are i.i.d. $\N(0,1)$ variables and 
\begin{equation}
\lambda^{(k)}(r)=1-\frac{\xi_{N,k}(1)}{\xi_{N,k}'(1)}\frac{\xi_{N,k}'(r)^{2}(1-r^{2})}{\xi_{N,k}(1)^{2}-\xi_{N,k}(r)^{2}}\geq0.\label{eq:lambdak}
\end{equation}
By symmetry, the conditional law of $\bM(\by)$ is also given by (\ref{eq:Mtilde-1}).

By Cauchy-Schwarz, 
\begin{equation}
\begin{aligned}D(r) & \leq\E\left[\J(\bM(\bx))^{2}\,\big|\,\vec{f}_{\K}(\bx)=\vec{f}_{\K}(\by)=0\right]=\E\left[\J(\tilde{\bM})^{2}\right]\end{aligned}
.\label{eq:D(r)_Bd}
\end{equation}
Denote the last column of $\tilde{\bM}$ by $Z$. Let $\mathbf{Q}$
be a $\K\times\K$ orthogonal matrix, measurable w.r.t. $Z$, such
that
\begin{equation}
\mathbf{Q}\tilde{\bM}=\left(\begin{array}{cc}
\tilde{\mathbf{A}} & 0\\
V & \|Z\|
\end{array}\right),\label{eq:QM}
\end{equation}
for some $\K-1\times N-1$ matrix $\tilde{\mathbf{A}}$ and row vector
$V$ of length $N-1$. Note that $\J(\tilde{\bM})=\J(\mathbf{Q}\tilde{\bM})$
and that the elements of $\tilde{\mathbf{A}}$ and $V$ are i.i.d.
$\N(0,1)$ variables which are also independent of $\|Z\|$. For any
$\K\times N$ matrix $\mathbf{B}$, $\J(\mathbf{B})=\prod_{i=1}^{\K}\Theta_{i}(\mathbf{B})$
where we define $\Theta_{i}(\mathbf{B})$ as the norm of projection
of the $i$-th row of $\mathbf{B}$ onto the orthogonal space to the
span of all previous rows (and by convention, $\Theta_{1}(\mathbf{B})$
as the norm of the first row). 

Applying the latter identity to $\mathbf{Q}\tilde{\bM}$, one sees
that in distribution 
\[
\J(\tilde{\bM})=\prod_{k=1}^{\K-1}\chi_{N-k}\cdot\sqrt{\|Z\|^{2}+\chi_{N-\K}^{2}},
\]
where $\chi_{i}$ denotes a Chi variable with $i$ degrees of freedom
and $\chi_{i}$ are independent of each other and $Z$. Hence,
\begin{equation}
0\leq\E\left[\J(\tilde{\bM})^{2}\right]\leq\frac{(N-1)!}{(N-\K)!}(N-\K+\sum_{k=1}^{\K}\lambda^{(k)}(r))\leq\frac{N!}{(N-\K)!}.\label{eq:JMtilde}
\end{equation}
Combining the above with (\ref{eq:KR2}) yields (\ref{eq:KR2-1}).
\end{proof}
For $\K=N$, the next two corollaries easily follow from Lemma \ref{lem:general_bd}. 
\begin{cor}
\label{cor:log_bd}For some universal constant $c>0$, for any $\epsilon\in(0,1)$
and interval $I\subset(-1+\epsilon,1)$, 
\[
\mathbb{E}\sol_{N,N}^{(2)}(I)\leq\frac{cN}{\epsilon}\log\big(\max_{k\leq N}\deg(\xi_{N,k})\big)\sup_{r\in I}\prod_{k=1}^{N}\Big(\frac{\xi_{N,k}'(1)^{2}(1-r^{2})}{\xi_{N,k}(1)^{2}-\xi_{N,k}(r)^{2}}\Big)^{\frac{1}{2}}.
\]
\end{cor}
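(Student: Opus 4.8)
The plan is to specialise Lemma~\ref{lem:general_bd} to $\K=N$. Then $(N-\K-2)/2=-1$, $N!/(N-\K)!=N!$, and from \eqref{eq:CN} together with the Legendre duplication formula $\Gamma(\tfrac N2)\Gamma(\tfrac{N+1}2)=2^{1-N}\sqrt{\pi}\,(N-1)!$ one computes $C_{N,N}\cdot N!=2N$. Hence
\[
\mathbb{E}\sol_{N,N}^{(2)}(I)\le 2N\int_{I}\prod_{k=1}^{N}\Big(\frac{\xi_{N,k}'(1)^{2}(1-r^{2})}{\xi_{N,k}(1)^{2}-\xi_{N,k}(r)^{2}}\Big)^{\frac12}\frac{\Phi_{N,N}(r)}{1-r^{2}}\,dr .
\]
Both the product and $\Phi_{N,N}(r)/(1-r^{2})$ are nonnegative, so bounding the product by its supremum over $r\in I$ reduces the claim to proving
\[
\int_{I}\frac{\Phi_{N,N}(r)}{1-r^{2}}\,dr\le\frac{c}{\epsilon}\log\!\big(\max_{k\le N}\deg(\xi_{N,k})\big)
\]
for a universal $c$. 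By \eqref{eq:Phi}, $\Phi_{N,N}(r)=\tfrac1N\sum_{k=1}^{N}\big(1-h_{k}(r)\big)$ with $h_{k}(r):=\tfrac{\xi_{N,k}(1)}{\xi_{N,k}'(1)}\cdot\tfrac{\xi_{N,k}'(r)^{2}(1-r^{2})}{\xi_{N,k}(1)^{2}-\xi_{N,k}(r)^{2}}$, and since $\deg(\xi_{N,k})\le\max_{j}\deg(\xi_{N,j})$ it suffices to show, for each fixed $k$ with $d:=\deg(\xi_{N,k})$, that $\int_{I}\tfrac{1-h_{k}(r)}{1-r^{2}}\,dr\le\tfrac c\epsilon\log d$, and then average over $k$.

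The heart of the matter is the pointwise estimate
\[
0\le 1-h_{k}(r)\le C\,d\,(1-r),\qquad r\in[\tfrac12,1],
\]
for a universal $C$; the left inequality is \eqref{eq:lambdak}. Fix $k$, write $\xi=\xi_{N,k}$, normalise $\xi(1)=1$, and put $\delta=1-r$, $y=1-\xi(r)$. Then $1-r^{2}=\delta(2-\delta)$, $1-\xi(r)^{2}=y(2-y)$, so
\[
1-h_{k}(r)=\frac{\xi'(1)\,y(2-y)-\xi'(1-\delta)^{2}\,\delta(2-\delta)}{\xi'(1)\,y(2-y)} .
\]
Because $\xi'$ has nonnegative coefficients it is nondecreasing on $[0,1]$, so $\xi'(1-\delta)\,\delta\le y=\int_{1-\delta}^{1}\xi'(t)\,dt\le\xi'(1)\,\delta$; writing $y=\xi'(1)\delta-z$ and $\xi'(1-\delta)=\xi'(1)-w$ with $z,w\ge0$ and $w=\int_{1-\delta}^{1}\xi''(t)\,dt\le\xi''(1)\delta$, one expands the numerator as $4\xi'(1)w\delta-2w^{2}\delta-2\xi'(1)z+\xi'(1-\delta)^{2}\delta^{2}-\xi'(1)y^{2}$, discards the three nonpositive terms, and uses $\xi'(1-\delta)\le\xi'(1)$ and $w\le\xi''(1)\delta$ to get numerator $\le\xi'(1)\delta^{2}\big(4\xi''(1)+\xi'(1)\big)$; the denominator is $\ge\xi'(1)\xi'(1-\delta)\delta$ since $0\le y\le1$. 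Hence $1-h_{k}(r)\le\delta\big(4\xi''(1)+\xi'(1)\big)/\xi'(1-\delta)$. When $\delta\le\xi'(1)/(2\xi''(1))$ one has $\xi'(1-\delta)\ge\tfrac12\xi'(1)$ and $\xi''(1)/\xi'(1)=\tfrac{\sum_{p}p(p-1)a_{p}^{2}}{\sum_{p}pa_{p}^{2}}\le d-1$, so the bound is $\le 2\delta\big(4\xi''(1)/\xi'(1)+1\big)\le 10\,d\,\delta$; when $\delta>\xi'(1)/(2\xi''(1))\ge 1/(2(d-1))$ one has $d\delta>\tfrac12$ and the trivial bound $1-h_{k}(r)\le1\le10\,d\,\delta$ suffices. (Here $\xi''(1)>0$ because $\xi\not\equiv0$ with all exponents $\ge2$.) This gives the estimate with $C=10$.

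Granting the estimate, bound $\int_{I}\tfrac{1-h_{k}}{1-r^{2}}\le\int_{-1+\epsilon}^{1}\tfrac{1-h_{k}}{1-r^{2}}$ (nonnegative integrand) and split the last integral at $\tfrac12$ and at $r^{\ast}:=1-\tfrac1{10d}\in[\tfrac12,1)$. On $(-1+\epsilon,\tfrac12]$ use $1-h_{k}\le1$ and $\int_{-1+\epsilon}^{1/2}\tfrac{dr}{1-r^{2}}=\tfrac12\log\tfrac{3(2-\epsilon)}{\epsilon}\le\tfrac12\log\tfrac6\epsilon$; on $[\tfrac12,r^{\ast}]$ use $1-h_{k}\le1$ and $\tfrac1{1-r^{2}}\le\tfrac1{1-r}$ to get $\int_{1/2}^{r^{\ast}}\tfrac{dr}{1-r}=\log(5d)$; on $[r^{\ast},1]$ use $1-h_{k}\le10d(1-r)$ and $\tfrac{1-r}{1-r^{2}}\le1$ to get $\int_{r^{\ast}}^{1}10d\,dr=1$. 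Adding these and using $d\ge2$, $\epsilon<1$ (so $\log\tfrac1\epsilon\le\tfrac1\epsilon$ and $1\le\log d/\log2$) yields $\int_{I}\tfrac{1-h_{k}}{1-r^{2}}\le\tfrac c\epsilon\log d$ for a universal $c$; averaging over $k$ and substituting into the display of the first paragraph completes the proof. The one genuinely delicate point is the pointwise estimate near $r=1$, where the leading-order parts of the numerator of $1-h_k$ cancel and the computation must be organised so that exactly a remainder of order $d\,(1-r)$ survives; everything else is elementary integration.
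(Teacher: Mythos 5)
Your proof is correct, but it follows a genuinely different route from the paper's. Both proofs start identically: specialize Lemma~\ref{lem:general_bd} to $\K=N$, compute $C_{N,N}\,N!$ (your exact evaluation $C_{N,N}N!=2N$ is a nice sharpening of the paper's $C_{N,N}N!\le cN$), and reduce to bounding $\int_I \Phi_{N,N}(r)/(1-r^2)\,dr$. The divergence is in how the $\log(\deg)$ factor is extracted. The paper bounds $\lambda^{(k)}(r)=1-h_k(r)$ by a mean-value estimate on $\xi(1)^2-\xi(r)^2$, rewrites the resulting upper bound on $\lambda^{(k)}(r)/(1-r^2)$ as an explicit weighted average of geometric-type sums $\sum_{i=0}^{m}r^i$ with $m\le 2d-2$ (using the power-series form of $\xi$), and then integrates term by term; the $\log d$ emerges as a harmonic sum $\sum_{i\le 2d-2}\tfrac1{i+1}$. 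You instead prove a \emph{pointwise} Taylor-type estimate $0\le 1-h_k(r)\le 10\,d\,(1-r)$ for $r\in[\tfrac12,1]$, carefully organizing the second-order cancellation in $1-h_k$ near $r=1$, and then split $\int_{-1+\epsilon}^{1}$ at $\tfrac12$ and at $r^\ast=1-\tfrac1{10d}$; the $\log d$ now appears transparently from $\int_{1/2}^{r^\ast}\frac{dr}{1-r}$ because $r^\ast$ sits at distance $\Theta(1/d)$ from $1$. Your approach is more analytic and makes the mechanism behind the $\log d$ factor geometrically visible (the integrand is essentially $1/(1-r)$ until the pointwise bound takes over at distance $\sim 1/d$ from the edge), at the cost of a somewhat more delicate expansion of the numerator of $1-h_k$; the paper's approach is more algebraic and compact, exploiting the polynomial structure of $\xi$ directly, but hides the location where the degree dependence actually originates. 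Both yield the stated universal-constant bound.
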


\begin{proof}
Using that $\V(\SN)=\frac{2\pi^{\frac{N+1}{2}}}{\Gamma\left(\frac{N+1}{2}\right)}$
and Stirling's approximation, one can verify that $C_{N,N}N!\leq cN$,
for some universal constant $c>0$. Hence, from Lemma \ref{lem:general_bd},
to prove the corollary it will be enough to show that 
\begin{equation}
\int_{-1+\epsilon}^{1}\frac{1}{1-r^{2}}\Phi_{N,N}(r)dr\leq\frac{10}{\epsilon}\log\big(\max_{k\leq N}\deg(\xi_{N,k})\big).\label{eq:req}
\end{equation}

Suppose that $\xi(t)=\sum_{p=2}^{d}a_{p}^{2}t^{p}$ for some $a_{p}\geq0$
and define $\lambda(r)$ as in (\ref{eq:lambdak}) with $\xi_{N,k}$
replaced by $\xi$. For any $r\in(-1,1)$,
\[
0<\xi(1)^{2}-\xi(r)^{2}\leq(1-r)\max_{t\in[r,1]}\frac{d}{dt}\Big(\xi(t)^{2}\Big)=2\xi(1)\xi'(1)(1-r)
\]
and
\begin{equation}\label{eq:lmbdabd2}
\lambda(r)\leq1-\frac{\xi'(r)^{2}(1+r)}{2\xi'(1)^{2}}=\frac{\xi'(1)^{2}-\xi'(r)^{2}+\xi'(1)^{2}-r\xi'(r)^{2}}{2\xi'(1)^{2}}.
\end{equation}
Using $\xi(t)=\sum_{p=2}^{d}a_{p}^{2}t^{p}$ we obtain that 
\begin{equation}\label{eq:lambdabd}
\frac{\lambda(r)}{1-r^{2}}\leq\frac{\sum_{p,q=2}^{d}a_{p}^{2}a_{q}^{2}pq\left(\sum_{i=0}^{p+q-3}r^{i}+\sum_{i=0}^{p+q-2}r^{i}\right)}{2(1+r)\sum_{p,q=2}^{d}a_{p}^{2}a_{q}^{2}pq}.
\end{equation}
Hence, 
\[
\int_{-1+\epsilon}^{1}\frac{\lambda(r)}{1-r^{2}}dr\leq\frac{2}{\epsilon}\int_{0}^{1}\sum_{i=0}^{2d-2}r^{i}dr\leq\frac{2}{\epsilon}(\log2d+1)\leq\frac{10}{\epsilon}\log d.
\]
Since $\Phi_{N,N}(r)=\frac{1}{N}\sum_{k=1}^{N}\lambda^{(k)}(r)$ (see
(\ref{eq:lambdak})), this implies (\ref{eq:req}).
\end{proof}

\begin{cor}
	\label{cor:homogeneous}For some universal constant $c'>0$, if $\Xi\in\mathscr{H}$ then for any interval $I\subset(-1,1)$, 
	\[
	\mathbb{E}\sol_{N,N}^{(2)}(I)\leq c' \cdot   \sup_{r\in I}\sum_{j=1}^N\bigg[ \frac{\xi_{N,j}'(1)}{\xi_{N,j}(1)} \prod_{k\in\{1,\ldots,N\}\setminus\{j\}}\Big(\frac{\xi_{N,k}'(1)^{2}(1-r^{2})}{\xi_{N,k}(1)^{2}-\xi_{N,k}(r)^{2}}\Big)^{\frac{1}{2}}\bigg].
	\]
\end{cor}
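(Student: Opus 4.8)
The plan is to start from the bound in Lemma~\ref{lem:general_bd} specialized to $\K=N$ and to exploit the special structure of homogeneous systems, namely that $\xi_{N,k}(t)=t^{d_{N,k}}$, which forces $\xi_{N,k}(1)=\xi_{N,k}'(1)/d_{N,k}$ and in particular makes the ratio $\xi_{N,k}'(1)/\xi_{N,k}(1)=d_{N,k}$ a genuine integer. The key point is that for homogeneous $\xi$, the quantity $\lambda^{(k)}(r)$ defined in \eqref{eq:lambdak} simplifies dramatically: using $\xi_{N,k}(r)=r^{d_{N,k}}$ one computes $\lambda^{(k)}(r)=1-d_{N,k}\frac{r^{2d_{N,k}-2}(1-r^2)}{1-r^{2d_{N,k}}}$, and $1-r^2=(1-r^{2d_{N,k}})\cdot\frac{1-r^2}{1-r^{2d_{N,k}}}$ shows that $\lambda^{(k)}(r)/(1-r^2)$ is a ratio of two explicit polynomials. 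The upshot I want is the pointwise identity (or a sharp enough inequality) relating the product over all $k$ and the weighted sum over $j$ in the statement.

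First I would set $P_k(r):=\Big(\frac{\xi_{N,k}'(1)^2(1-r^2)}{\xi_{N,k}(1)^2-\xi_{N,k}(r)^2}\Big)^{1/2}$, so that the integrand in \eqref{eq:KR2-1} with $\K=N$ is (up to the prefactor $C_{N,N}N!$) equal to $\prod_{k=1}^N P_k(r)\cdot(1-r^2)^{-1}\cdot\Phi_{N,N}(r)$, where $\Phi_{N,N}(r)=\frac1N\sum_{k=1}^N\lambda^{(k)}(r)$. Second, I would record the elementary homogeneous identity $\frac{1-\lambda^{(k)}(r)}{1-r^2}=\frac{d_{N,k}r^{2d_{N,k}-2}}{1-r^{2d_{N,k}}}=\frac{\xi_{N,k}'(1)}{\xi_{N,k}(1)}\cdot\frac{r^{2d_{N,k}-2}}{1-r^{2d_{N,k}}}$, and observe that $P_k(r)^2=\frac{d_{N,k}^2(1-r^2)}{1-r^{2d_{N,k}}}$, so that $\frac{1-\lambda^{(k)}(r)}{1-r^2}=\frac{\xi_{N,k}'(1)}{\xi_{N,k}(1)}\cdot\frac{P_k(r)^2}{d_{N,k}^2}\cdot\frac{r^{2d_{N,k}-2}}{1-r^2}$; more usefully, I expect the clean relation $\frac{\lambda^{(k)}(r)}{1-r^2}\le$ something telescoping against $P_k(r)$, or better, a direct manipulation showing $\frac{1}{1-r^2}\prod_k P_k(r)\cdot\frac1N\sum_k\lambda^{(k)}(r)\le \sum_j\frac{\xi_{N,j}'(1)}{\xi_{N,j}(1)}\prod_{k\ne j}P_k(r)$ after integrating $r$ over $I$. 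The integration step is where the factor $(1-r^2)^{-1}$ is absorbed: writing $\lambda^{(k)}(r)=1-(1-\lambda^{(k)}(r))$ and using $\int_I \frac{dr}{1-r^2}$ is \emph{not} bounded near $r=\pm1$, so one must instead note that near $r=\pm1$ one has $1-\lambda^{(k)}(r)\to d_{N,k}\cdot\tfrac{0}{0}$; more precisely, since $\xi_{N,k}(1)^2-\xi_{N,k}(r)^2$ vanishes to first order at $r=1$ while $1-r^2$ also vanishes to first order, $P_k(r)$ stays bounded away from $0$ and $\infty$ on $I\subset(-1,1)$ only if $I$ is bounded away from the endpoints --- but here $I$ may reach the endpoints, and the claimed bound has no $\epsilon$, so the mechanism must be that $\prod_{k}P_k(r)\cdot(1-r^2)^{-1}\Phi_{N,N}(r)$ is \emph{itself} integrable without help because $\Phi_{N,N}(r)(1-r^2)^{-1}$ is, after distributing, a sum of terms each of which either has a compensating zero of $\Phi$ or is one of the $\prod_{k\ne j}P_k$ terms times an integrable factor of $r^{2d_j-2}$.

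Concretely, the core computation I expect to carry out is: $\frac{1}{N}\sum_{k=1}^N\lambda^{(k)}(r)\cdot\frac{\prod_{k=1}^N P_k(r)}{1-r^2}=\frac{1}{N}\sum_{j=1}^N\Big(\prod_{k\ne j}P_k(r)\Big)\cdot\frac{\lambda^{(j)}(r)P_j(r)}{1-r^2}$, and then check the pointwise bound $\frac{\lambda^{(j)}(r)P_j(r)}{1-r^2}\le \frac{\xi_{N,j}'(1)}{\xi_{N,j}(1)}$ uniformly in $r\in(-1,1)$; indeed with $d=d_{N,j}$ this is $\frac{d(1-r^2)}{\sqrt{1-r^{2d}}}\cdot\frac{1}{1-r^2}\cdot\big(1-\frac{dr^{2d-2}(1-r^2)}{1-r^{2d}}\big)=\frac{d(1-r^{2d})^{1/2}}{1-r^{2d}}\cdot\big(1 - \frac{dr^{2d-2}(1-r^2)}{1-r^{2d}}\big)$, hmm this doesn't visibly equal $d$; so rather than an equality I will need the inequality $\lambda^{(j)}(r)P_j(r)\le (1-r^2)\frac{\xi_{N,j}'(1)}{\xi_{N,j}(1)}=d(1-r^2)$, i.e. $\lambda^{(j)}(r)\sqrt{\frac{d^2(1-r^2)}{1-r^{2d}}}\le d(1-r^2)$, i.e. $\lambda^{(j)}(r)\le \sqrt{(1-r^2)(1-r^{2d})}/\sqrt{1}$ --- wait, this rearranges to $\lambda^{(j)}(r)\le \sqrt{1-r^{2d}}\cdot\sqrt{1-r^2}$, and since $\lambda^{(j)}(r)\in[0,1]$ with $\lambda^{(j)}(r)=1-\frac{dr^{2d-2}(1-r^2)}{1-r^{2d}}$, for $|r|$ bounded away from $1$ the right side is bounded away from $0$ only if $1-r^2$ is, so a naive pointwise bound fails at the endpoints and one again needs that $\prod_{k\ne j}P_k(r)$ supplies additional decay. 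The honest route, and the main obstacle, will be to prove the integrated inequality $\int_I \frac{\Phi_{N,N}(r)}{1-r^2}\prod_{k=1}^N P_k(r)\,dr\le c'\sup_{r\in I}\sum_j\frac{\xi_{N,j}'(1)}{\xi_{N,j}(1)}\prod_{k\ne j}P_k(r)$ directly, by bounding $\Phi_{N,N}(r)\le 1$, splitting the region $I$ into $\{|r|\le 1/2\}$ (where $(1-r^2)^{-1}\le 4/3$ and $\prod P_k$ controls everything, and one term $\frac{\xi_{N,j}'(1)}{\xi_{N,j}(1)}=d_{N,j}\ge 2$ alone dominates $\prod_k P_k$ by $P_j(r)\le d_{N,j}$) and $\{|r|>1/2\}$ (where one uses the explicit form $\prod P_k(r)=\prod_k\frac{d_{N,k}(1-r^2)^{1/2}}{(1-r^{2d_{N,k}})^{1/2}}$ and the fact that $1-r^{2d}\ge c(1-r^2)$ for $d\ge 1$, making each $P_k(r)\ge c^{1/2}$, together with $\frac{1}{1-r^2}\int_{|r|>1/2}dr$ being handled by the single factor $r^{2d_j-2}$ hidden in $1-\lambda^{(j)}$). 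I anticipate the bookkeeping of where the compensating $(1-r^2)$ comes from --- whether from $\Phi$ itself when all $d_{N,k}$ are large, or from brute bounding when some $d_{N,k}=2$ --- to be the delicate part, and I would organize it by the case analysis just described, absorbing all constants into $c'$.

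Thus the steps, in order: (1) plug $\K=N$ into \eqref{eq:KR2-1} and note $C_{N,N}N!\le cN$ as in Corollary~\ref{cor:log_bd}; (2) record the homogeneous identities $P_k(r)^2=\frac{d_{N,k}^2(1-r^2)}{1-r^{2d_{N,k}}}$ and $1-\lambda^{(k)}(r)=\frac{d_{N,k}r^{2d_{N,k}-2}(1-r^2)}{1-r^{2d_{N,k}}}$, and deduce $P_k(r)\le d_{N,k}=\frac{\xi_{N,k}'(1)}{\xi_{N,k}(1)}$ for all $r\in(-1,1)$; (3) bound $\Phi_{N,N}(r)\le\frac1N\sum_k\lambda^{(k)}(r)\le 1$ and distribute to write the integrand as $\frac{N}{N(1-r^2)}\sum_j\lambda^{(j)}(r)\prod_{k}P_k(r)$; (4) on $|r|\le1/2$, bound $(1-r^2)^{-1}\le 4/3$, $\prod_k P_k(r)\le P_j(r)\prod_{k\ne j}P_k(r)\le d_{N,j}\prod_{k\ne j}P_k(r)$ and integrate over a set of length $\le 2$; (5) on $|r|>1/2$, use $\lambda^{(j)}(r)\le 1$ and $\prod_k P_k(r)=P_j(r)\prod_{k\ne j}P_k(r)$ with $P_j(r)^2(1-r^2)^{-1}=d_{N,j}^2(1-r^{2d_{N,j}})^{-1}\le d_{N,j}^2\cdot 2$ for $|r|>1/2$ --- wait, $1-r^{2d}$ can be close to $0$; instead use that $\sum_j\lambda^{(j)}(r)P_j(r)(1-r^2)^{-1}=\sum_j(1-(1-\lambda^{(j)}(r)))P_j(r)(1-r^2)^{-1}$ and for the $(1-\lambda^{(j)})$ part invoke step~(2) to get $P_j(r)(1-\lambda^{(j)}(r))(1-r^2)^{-1}=d_{N,j}^3 r^{2d_{N,j}-2}(1-r^{2d_{N,j}})^{-3/2}(1-r^2)^{1/2}$ whose integral over $(-1,1)$ is finite and bounded uniformly in $d_{N,j}$ (a one-variable calculus fact), while for the $P_j(r)(1-r^2)^{-1}$ part one uses $\int_{1/2<|r|<1}P_j(r)(1-r^2)^{-1}dr\le d_{N,j}\int(1-r^{2d_{N,j}})^{-1/2}(1-r^2)^{-1/2}dr$, again a convergent integral uniformly bounded in $d_{N,j}$; (6) collect all bounds, replacing each integral by its $\sup_{r\in I}$ of the $j$-th product times a universal constant, and set $c'$ to be the resulting constant. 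The single genuine obstacle is step~(5)'s uniform-in-degree convergence of the one-dimensional integrals near $r=\pm1$; this I expect to handle by the substitution $r=1-s$ and comparing $1-r^{2d}\asymp 2ds$ for small $s$, which makes $(1-r^{2d})^{-1/2}\asymp (2ds)^{-1/2}$ integrable with the extra factor of $d$ out front giving exactly the $d_{N,j}^{1/2}$ growth that is cancelled by another hidden factor, or is simply absorbed because the final bound already carries $\frac{\xi_{N,j}'(1)}{\xi_{N,j}(1)}=d_{N,j}$.
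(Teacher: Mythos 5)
Your overall strategy (start from Lemma~\ref{lem:general_bd} with $\K=N$, use $C_{N,N}N!\le cN$, write $\Phi_{N,N}=\frac1N\sum_j\lambda^{(j)}$, distribute, and extract one factor $P_j(r)\frac{\xi_{N,j}'(1)}{\xi_{N,j}(1)}$ per term) is the same as the paper's, and your steps (1)--(4) are fine. The problem is step (5), which contains a genuine gap.

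You propose to treat the region $|r|>1/2$ by splitting $\lambda^{(j)}=1-(1-\lambda^{(j)})$ and bounding the two pieces
\[
\frac{P_j(r)}{1-r^2}=\frac{d_{N,j}}{\sqrt{(1-r^2)(1-r^{2d_{N,j}})}}
\quad\text{and}\quad
\frac{P_j(r)\bigl(1-\lambda^{(j)}(r)\bigr)}{1-r^2}=\frac{d_{N,j}^{2}r^{2d_{N,j}-2}(1-r^2)^{1/2}}{(1-r^{2d_{N,j}})^{3/2}}
\]
separately, claiming both are integrable on $(1/2,1)$ uniformly in $d_{N,j}$. Neither is. Set $d=d_{N,j}$ and $r=1-s$: then $1-r^{2d}\sim 2ds$ and $1-r^2\sim 2s$, so the first piece behaves like $d\cdot(2ds\cdot 2s)^{-1/2}=\tfrac{\sqrt d}{2s}$ and the second like $d^2\cdot\sqrt{2s}\cdot(2ds)^{-3/2}=\tfrac{\sqrt d}{2s}$; both integrals diverge logarithmically near $r=1$. (Also, your second displayed expression has $d_{N,j}^3$; it should be $d_{N,j}^2$.) The two $\tfrac1s$ singularities cancel when you recombine $\lambda^{(j)}$, which is precisely why this split cannot work: the cancellation in $\lambda^{(j)}(r)=1-(1-\lambda^{(j)}(r))$ near $r=\pm1$ is the whole point, and must be exploited \emph{before} integrating.

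The idea you abandoned in the middle of the proposal is the right one: bound $\frac{\lambda^{(j)}(r)P_j(r)}{1-r^2}$ \emph{as a single object}, but not by the constant $d_{N,j}$ (you correctly observed that fails) --- by the \emph{integrable} function $\frac{d_{N,j}}{\sqrt{1-r^2}}$. Concretely, one needs the pointwise inequality $\lambda^{(j)}(r)\le\sqrt{1-r^{2d_{N,j}}}$, which then gives
\[
\frac{\lambda^{(j)}(r)P_j(r)}{1-r^2}
=\frac{d_{N,j}\,\lambda^{(j)}(r)}{\sqrt{(1-r^2)(1-r^{2d_{N,j}})}}
\le\frac{d_{N,j}}{\sqrt{1-r^2}},
\]
and $\int_{-1}^1(1-r^2)^{-1/2}dr=\pi$. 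That inequality is exactly equation~\eqref{eq:req2} of the paper's proof; it follows, for $r\ge 0$, from \eqref{eq:lmbdabd2} and $1-\tfrac12(1+r)r^{2d-2}\le 1-r^{2d}\le\sqrt{1-r^{2d}}$ (using $r^2\le\tfrac{1+r}{2}$ for $r\in[0,1)$), and for $r<0$ one can simply note that $\lambda^{(j)}$ and $1-r^{2d}$ are even functions of $r$, so the bound for $r\ge 0$ suffices. (One can also prove $\lambda\le\sqrt{1-u^d}$ with $u=r^2$ directly: writing $S(u)=\sum_{i=0}^{d-1}u^i$, one has $\lambda=(S-du^{d-1})/S$, Cauchy--Schwarz on $S-du^{d-1}=\sum_i u^{i/2}\cdot(1-u^{d-1-i})u^{i/2}$ gives $(S-du^{d-1})^2\le S(S-du^{d-1})$, and $S-du^{d-1}\le (1-u)S^2$ reduces to $uS\le d$, which is clear.) Once this pointwise bound is in hand, your steps (3),(4),(6) go through and give the claimed corollary; no case split $|r|\lessgtr 1/2$ is needed.
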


\begin{proof}
	As stated in the previous proof, $C_{N,N}N!\leq cN$ for some universal $c$. Using Lemma \ref{lem:general_bd}, to prove the current lemma it will be enough to show that for any $k$ and $r\in (-1,1)$,
	\begin{equation}
	\frac{1}{1-r^{2}}\lambda^{(k)}(r) \Big(\frac{1-r^{2}}{1-\xi_{N,k}(r)^{2}/\xi_{N,k}(1)^{2}}\Big)^{\frac{1}{2}} \leq \theta(r),\label{eq:req2}
	\end{equation}
where $\theta(r)$ is some universal function such that $\int_{-1}^{1}\theta(r)dr<\infty$, assuming that $\xi_{N,k}(t)=t^{d_{N,k}}$ for some $d_{N,k}$. Indeed, using \eqref{eq:lmbdabd2}, the left-hand side of \eqref{eq:req2} is bounded by
\[
\frac{1-\frac12(1+r)r^{2d_{N,k}-2}}{1-r^2}
\bigg(\frac{1-r^2}{1-r^{2d_{N,k}}}\bigg)^{\frac{1}{2}}
\leq \sqrt{\frac{1}{1-r^2}}.
\]
\end{proof}
We will need the following lemma to deal with inner-product values
close to $-1$, here again for the case $\K=N$.
\begin{lem}
\label{lem:negativeoverlaps} For any $\epsilon<1/4$, with $I_{-}(\epsilon)=[-1,-1+\epsilon]$
and $I_{+}(\epsilon)=[1-\epsilon,1]$,
\[
\sol_{N,N}^{(2)}(I_{-}(\epsilon))\leq2\sol_{N,N}^{(2)}(I_{+}(4\epsilon)).
\]
\end{lem}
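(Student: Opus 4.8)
The plan is to exploit the antipodal symmetry of the sphere together with the fact that we are now in the regime $\K=N$, where the polynomials $f_N^{(k)}$ have no linear part (the summation in \eqref{eq:random_poly} starts at $p=2$), so each $f_N^{(k)}$ has a definite parity under $\bx\mapsto-\bx$. Write $f_N^{(k)}(\bx)=\sum_{p=2}^{d_{N,k}}a_{p,N,k}\langle\bG_p^{(k)},\bx^{\otimes p}\rangle$; the even-degree part is unchanged and the odd-degree part changes sign under $\bx\mapsto-\bx$. In particular, for any solution pair $(\bx,\by)$ of $\vec f_\K(\bx)=\vec f_\K(\by)=0$, the reflected pair $(\bx,-\by)$ need not be a solution of the \emph{same} system, but it is a solution of the system obtained by flipping the sign of the odd part of each $f_N^{(k)}$ — and that modified system has the same law. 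This is the mechanism that will convert overlaps near $-1$ into overlaps near $+1$.

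First I would make this precise by conditioning on the parities. Split each index $k$ according to whether the vector of odd-degree coefficients $(a_{p,N,k})_{p\ \mathrm{odd}}$ is zero or not; more robustly, introduce auxiliary i.i.d.\ signs $\varepsilon_k\in\{\pm1\}$ and observe that the family $(f_N^{(k)})$ is equal in distribution to the family in which one replaces each odd-degree tensor $\bG_p^{(k)}$ by $\varepsilon_k\bG_p^{(k)}$. Under the substitution $\by\mapsto-\by$, a zero $(\bx,\by)$ of the original first $N$ polynomials with $\bx\cdot\by=r$ corresponds to a zero $(\bx,-\by)$ of the sign-flipped polynomials with $(-\by)\cdot\bx=-r$, and the two counting measures $\sol^{(2)}_{N,N}(\cdot)$ agree because $J(\nabla\vec f_\K)$ is insensitive to these reflections and to the sign flips. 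Hence $\sol^{(2)}_{N,N}(I_-(\epsilon))$ for the original system equals, in distribution, $\sol^{(2)}_{N,N}(-I_-(\epsilon))=\sol^{(2)}_{N,N}([1-\epsilon,1])$ for the sign-flipped system — but the latter has the same law as the original. So in distribution $\sol^{(2)}_{N,N}(I_-(\epsilon))=\sol^{(2)}_{N,N}(I_+(\epsilon))\le 2\sol^{(2)}_{N,N}(I_+(4\epsilon))$, which is even stronger than claimed; the factor $2$ and the widening to $4\epsilon$ are the slack one would actually need if, instead of a clean antipodal bijection, one has to pass through the chart used in Lemma \ref{lem:conditionalGradient} and control a Jacobian factor $(1-r^2)^{(N-2)/2}$ that is not exactly symmetric about $r=0$.

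Concretely, rather than invoking distributional equality I would argue directly on the Kac--Rice integral \eqref{eq:KR2} with $\K=N$: the integrand is
\[
h(r):=\prod_{k=1}^{N}\Big(\tfrac{\xi_{N,k}'(1)^2}{\xi_{N,k}(1)^2-\xi_{N,k}(r)^2}\Big)^{1/2}(1-r^2)^{(N-2)/2}D(r),
\]
and I would show $h(-r)\le 2\,h(r')$ for a suitable $r'\in[1-4\epsilon,1]$ whenever $r\in[1-\epsilon,1]$, by comparing the three factors one at a time: $D(r)=D(-r)$ up to the parity considerations above (reflecting $\by$), the Jacobian factor $(1-r^2)^{(N-2)/2}$ is genuinely symmetric in $r$, and the covariance factor requires comparing $\xi_{N,k}(1)^2-\xi_{N,k}(-r)^2$ with $\xi_{N,k}(1)^2-\xi_{N,k}(r')^2$ — here homogeneity/monotonicity of $\xi_{N,k}$ on $[0,1]$ is used, and the loss of a constant per factor is avoided precisely because the product telescopes after the substitution $r\mapsto r'$. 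The main obstacle is exactly this last point: controlling the product over $k$ of the covariance ratios when passing from argument $-r$ to a nearby positive argument, uniformly in $\Xi$, so that no factor $c^N$ sneaks in. I expect the resolution to be that, because $\xi_{N,k}(r)^2$ is increasing on $[0,1]$ and $\xi_{N,k}(-r)^2\le\xi_{N,k}(r)^2$, one gets $\xi_{N,k}(1)^2-\xi_{N,k}(-r)^2\ge\xi_{N,k}(1)^2-\xi_{N,k}(r)^2$, so each factor only \emph{helps}, and a single universal constant (the source of the $2$) suffices to absorb the mismatch in the Hausdorff-measure normalization and the chart Jacobian, with $4\epsilon$ (rather than $\epsilon$) giving room for the interval $I_+$ to contain the image of $I_-$ under the map $r\mapsto r'$ after accounting for these distortions.
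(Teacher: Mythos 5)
Your proof has a genuine gap. The core claim — that the map $(\bx,\by)\mapsto(\bx,-\by)$, combined with flipping the sign of the odd-degree tensors, sends zero-pairs of $\vec f_\K$ to zero-pairs of an equal-in-law system — is false for mixed-parity polynomials, which is precisely the case the lemma must handle (the homogeneous case is covered separately by Corollary \ref{cor:homogeneous}). Write $\tilde f_N^{(k)}(\bz):=f_N^{(k)}(-\bz)$, i.e.\ the system with $\bG_p^{(k)}\mapsto(-1)^p\bG_p^{(k)}$; this does have the same law as $\vec f_\K$. If $\vec f_\K(\bx)=\vec f_\K(\by)=0$ then indeed $\tilde{\vec f}_\K(-\by)=\vec f_\K(\by)=0$, but there is no reason that $\tilde{\vec f}_\K(\bx)=0$: that would require $f_N^{(k)}(-\bx)=0$, which fails unless each $f_N^{(k)}$ is purely even or purely odd. (For example $f(x)=x^2+x^3$ vanishes at $x=-1$ but $\tilde f(x)=x^2-x^3$ does not.) So $(\bx,-\by)$ is not a zero-pair of \emph{any} single system obtained by sign-flips, the claimed bijection does not exist, and the distributional equality $\sol_{N,N}^{(2)}(I_-(\epsilon))\overset{d}{=}\sol_{N,N}^{(2)}(I_+(\epsilon))$ is false in general. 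Your fallback of directly comparing the Kac--Rice integrand at $-r$ and $r'$ inherits the same difficulty: $D(-r)$ and $D(r)$ involve conditioning that is not related by any reflection symmetry once $\xi_{N,k}$ mixes parities, and you do not carry out the comparison.

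The paper's proof is of a completely different nature: it is a deterministic, realization-by-realization packing argument that never touches the law of $\vec f_\K$. Let $A=\{\bx\in\SN:\vec f_N(\bx)=0\}$ and take a maximal collection $B$ of disjoint two-element subsets $\{\bx,\by\}\subset A$ with $\bx\cdot\by\le -1+\epsilon$; let $B_1$ be the set of points appearing in $B$. By maximality, every near-antipodal pair in $A\times A$ has at least one endpoint in $B_1$, giving a factor $2$ and reducing to counting, for $\by\in B_1$ with matched partner $\by_*$, those $\bx\in A$ with $\bx\cdot\by\le-1+\epsilon$. Both $\bx$ and $\by_*$ satisfy $\|\cdot+\by\|^2\le 2\epsilon$, so by the triangle inequality $\|\bx-\by_*\|\le 2\sqrt{2\epsilon}$ and hence $\bx\cdot\by_*\ge 1-4\epsilon$, which injects near-antipodal pairs into near-aligned pairs. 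This yields the pathwise inequality $\sol_{N,N}^{(2)}(I_-(\epsilon))\le 2\sol_{N,N}^{(2)}(I_+(4\epsilon))$ almost surely, which is strictly stronger than a distributional or in-expectation statement and is uniform in $\Xi$ with no parity hypothesis whatsoever. I recommend adopting this geometric argument; the symmetry route does give the stronger equality $\sol_{N,N}^{(2)}(I_-(\epsilon))\overset{d}{=}\sol_{N,N}^{(2)}(I_+(\epsilon))$, but only for $\Xi\in\mathscr{H}$, where the lemma is not needed.
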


\begin{proof}
Define $A=\{\bx\in\SN:\,\vec{f}_{N}(\bx)=0\}$. Let $B$ be a maximal
set with the following property: its elements are disjoint subsets
of $A$ of size $2$ and if $\{\bx,\by\}\in B$ then $\bx\cdot\by\leq-1+\epsilon$.
Define $B_{1}=\cup_{\{\bx,\by\}\in B}\{\bx,\by\}$. From maximality,
if $(\bx,\by)\in A\times A$ and $\bx\cdot\by\leq-1+\epsilon$ then
$|\{\bx,\by\}\cap B_{1}|\geq1$. Hence,
\begin{align*}
 & \#\{(\bx,\by)\in A\times A:\,\bx\cdot\by\leq-1+\epsilon\}\\
 & \leq2\#\{(\bx,\by)\in A\times B_{1}:\,\bx\cdot\by\leq-1+\epsilon\}\\
 & =2\sum_{\by\in B_{1}}\#\{\bx\in A:\,\bx\cdot\by\leq-1+\epsilon\}\leq\cdots
\end{align*}

For any $\by\in B_{1}$ there is exactly one element $\{\by,\by_{*}\}\in B$
containing $\by$. If $\bx\cdot\by\leq-1+\epsilon$, then $\|\bx+\by\|^{2}\leq2\epsilon$.
The same holds for $\bx=\by_{*}$. Therefore, by the triangle inequality,
$\|\bx-\by_{*}\|\leq2\sqrt{2\epsilon}$ and $\bx\cdot\by_{*}\geq1-4\epsilon$.
Thus, we can continue the chain of inequalities as follows
\begin{align*}
\cdots & \leq2\sum_{\by_{*}\in B_{1}}\#\{\bx\in A:\,\bx\cdot\by_{*}\geq1-4\epsilon\}\\
 & \leq2\#\{(\bx,\by)\in A\times A:\,\bx\cdot\by\geq1-4\epsilon\}.
\end{align*}
\end{proof}

\begin{rem}\label{rem:blowup}
	Using that $\Phi_{N,\K}(r)\leq1$ in \eqref{eq:KR2-1} gives a bound which is not sufficiently good for $\K=N$, as the integrand behaves like $C/(1-r^2)$ close to $r=1$, which is not an integrable function. 
	The last three results will be used to deal with this issue, under the assumptions on $\Xi$ as in Theorem \ref{thm:2ndmoment-O(1)}. 
	
	Assume that $\K=N$ and that $\xi_{N,1}(r)=\xi(r)=r^2+(\log p/p) r^p$ and that for $k\geq 2$, $\xi_{N,k}(r)=r^2$. 
	Denote $M_k(r)=\big(\frac{1-r^{2}}{1-\xi_{N,k}(r)^{2}/\xi_{N,k}(1)^{2}}\big)^{\frac{1}{2}}$  and $L(r)=(1-r^{2})^{-1}\Phi_{N,N}(r)$. Then, $\prod_{k=2}^{N}M_k(r)\geq 2^{-(N-1)/2}$ uniformly in $r\in(-1,1)$ for some $C>0$. One can check that $M_1(r)L(r)\geq (N4(1-r^2))^{-1}$ for any $r\in(\frac12,1-2\log p/p)$, for large enough $p$. Using this one can verify that if we take $p=p(N)=\lfloor \exp(e^{\frac12\log(2)N+g(N)})\rfloor$ for $g(N)=o(N)$ that goes to $\infty$, then the right-hand side of \eqref{eq:KR2-1} divided by the first moment squared goes to $\infty$ roughly like $e^{g(N)}$ as $N\to\infty$.
	
	We recall that the bound of \eqref{eq:KR2-1} was derived using Cauchy-Schwarz in \eqref{eq:D(r)_Bd}. However, using Lemma \ref{lem:GaussianBound} one can show that $D(r)\geq \left[\E \J(\tilde{\bM})\right]^2$ which asymptotically gives a matching lower bound to \eqref{eq:KR2-1}, up to a polynomial factor in $N$. Hence, the bound we derived for \eqref{eq:KR2-1} also gives a similar bound for $\mathbb{E}[\sol_{N,N}^{2}]/[\mathbb{E}\sol_{N,N}]^{2}$, showing it goes to $\infty$ for the system as above. Thus, our assumption on the maximal degree in the  non-homogeneous case in Theorem \ref{thm:2ndmoment-O(1)} is essentially optimal.
\end{rem}

\subsection{\label{subsec:pf_thm_2ndmoment-log}Proof of Theorem \ref{thm:2ndmoment-log}}

Let $\epsilon>0$ be an arbitrary number and denote $I(\epsilon)=[-1,1]\setminus[-\epsilon,\epsilon]$.
Let $\xi(r)=\sum_{p=2}^{d}a_{p}^{2}r^{p}$ and define $\nu(r):=\frac{\xi(r)}{\xi(1)}=\sum_{p=2}^{d}b_{p}^{2}r^{p}$,
where $b_{p}^{2}:=a_{p}^{2}/\sum_{p=2}^{d}a_{i}^{2}$. For any $r\in[-1,1],$
\[
1-\nu(r)^{2}=\sum_{p=2}^{d}\sum_{q=2}^{d}b_{p}^{2}b_{q}^{2}(1-r^{p+q})\geq1-r^{4}
\]
and 
\begin{equation}
\frac{\xi'(1)^{2}(1-r^{2})}{\xi(1)^{2}-\xi(r)^{2}}/\frac{\xi'(1)^{2}}{\xi(1)^{2}}=\frac{1-r^{2}}{1-\nu(r)^{2}}\leq\frac{1-r^{2}}{1-r^{4}}=\frac{1}{1+r^{2}}.\label{eq:ub}
\end{equation}

Assume that $\K\leq N-1$. The random variable $\sol_{N,\K}^{(2)}(\{\pm1\})$
is bounded by twice the $2(N-\K)$-dimensional Hausdorff measure of
(\ref{eq:zeroset}). The Hausdorff dimension of the latter subset
is $N-\K$ almost surely since $\E\sol_{N,\K}<\infty$. Thus, $\E\sol_{N,\K}^{(2)}(\{\pm1\})=0$.
Using the additivity of $\sol_{N,\K}^{(2)}(\cdot)$, Lemma \ref{lem:general_bd},
Theorem \ref{thm:1stmoment} and (\ref{eq:ub}) we obtain that 
\begin{equation}
\begin{aligned}\frac{\E\sol_{N,\K}^{(2)}(I(\epsilon))}{(\E\sol_{N,\K})^{2}} & \leq\frac{C_{N,\K}N!}{\V(\SNK)^{2}(N-\K)!}\theta(1+\epsilon^{2})^{-\frac{\K}{2}}(1-\epsilon^{2})^{\frac{N-\K-1}{2}},\end{aligned}
\label{eq:bd1}
\end{equation}
where $\theta:=\int_{-1}^{1}1/\sqrt{1-r^{2}}dr<\infty$. Using the
Legendre duplication formula one can check that the ratio on the right-hand
side is bounded from above by $\beta N$ for some constant $\beta>0$,
uniformly in $N$ and $\K$. Hence, 
\[
\limsup_{N\to\infty}\sup_{\K\leq N-1}\sup_{\Xi}\frac{1}{N}\log\frac{\E\sol_{N,\K}^{(2)}(I(\epsilon))}{(\E\sol_{N,\K})^{2}}\leq\frac{1}{4}\max\{\log(1-\epsilon^{2}),-\log(1+\epsilon^{2})\}<0,
\]
where here the supremum can be taken over all $\Xi$ without any restriction.

Next, we move to the case $\K=N$. Since $I\mapsto\sol_{N,N}^{(2)}(I)$ is additive, it is enough to prove that 
\begin{equation}\label{eq:Z2ineq}
\limsup_{N\to\infty}\sup_{\Xi}\frac{1}{N}\log\frac{\E\sol_{N,N}^{(2)}(I_j)}{(\E\sol_{N,N})^{2}}<0,
\end{equation}
for some finite collection of subsets $I_1,\ldots,I_n$ whose union is $I(\epsilon)$. For $I=\{\pm1\}$ the inequality follows by Theorem \ref{thm:1stmoment}. If we take the supremum over $\Xi\in\mathscr{H}$, with $I=(-1,1)$ the inequality follows by Corollary \ref{cor:homogeneous} similarly to the above. 

Now assume that $\alpha\in(0,\frac12\log 2)$ and that the supremum is over all $\Xi$ such that $d_{N}(\Xi):=\max_{k\leq N}\deg(\xi_{N,k})\leq  e^{e^{N\alpha}}$. Choose some $\epsilon>0$ such that $\alpha<\frac12\log(1+(1-4\epsilon)^2)$. For $I=[-1+\epsilon,1-4\epsilon]$, \eqref{eq:Z2ineq} follows by Theorem \ref{thm:1stmoment}, Lemma \ref{lem:general_bd}, (\ref{eq:ub})  and the fact that $\Phi_{N,N}(r)\leq 1$ since on the same interval $1-r^2>c$ for some constant $c=c(\epsilon)$. For $I=[1-4\epsilon,1)$, from
Corollary \ref{cor:log_bd} and (\ref{eq:ub}),
\[
\mathbb{E}\sol_{N,N}^{(2)}(I)\leq\frac{cN}{\delta}\log d_{N}(\Xi)\Big(\frac{1}{1+(1-4\epsilon)^2}\Big)^{\frac{N}{2}}\prod_{k=1}^{N}\frac{\xi_{N,k}'(1)}{\xi_{N,k}(1)}.
\]
Thus, \eqref{eq:Z2ineq} follows for the same interval $I$. 
By Lemma \ref{lem:negativeoverlaps}, 
$
\mathbb{E}\sol_{N,N}^{(2)}([-1,-1+\epsilon)\leq2\mathbb{E}\sol_{N,N}^{(2)}([1-4\epsilon,1])
$, and \eqref{eq:Z2ineq} thus follows with $I=[-1,-1+\epsilon]$.\qed

\section{\label{sec:Matching-of-moments}Matching of moments: proof of Theorem
\ref{thm:2ndmoment-O(1)}}

Note that
$\sol_{N,\K}^{2}=\sol_{N,\K}^{(2)}([-1,1])$. From Theorem \ref{thm:2ndmoment-log},
there exists some sequence $\epsilon_{N}\to0$ such that
\[
\sup_{\K\leq N}\sup_{\Xi} \frac{\E\sol_{N,\K}^{(2)}(\Xi,[-1,1]\setminus(-\epsilon_{N},\epsilon_{N}))}{\E(\sol_{N,\K}(\Xi)^{2})}\leq O\Big(\frac{1}{N^2}\Big).
\]
Hence, to prove (\ref{eq:momentsmatching}) it will be enough to show
that 
\begin{equation}
\sup_{\K\leq N}\sup_{\Xi}\frac{\E\sol_{N,\K}^{(2)}(\Xi,(-\epsilon_{N},\epsilon_{N}))}{(\E\sol_{N,\K}(\Xi))^{2}}\leq 1 +2N^{-\frac12}+ o\big(N^{-\frac12}\big).\label{eq:lim1}
\end{equation}

By \cite[Eq. (2.2)]{MR2611044},
\[
\frac{\V(\SNt)}{\V(\SN)}=\frac{1}{\sqrt{\pi}}\frac{\Gamma((N+1)/2)}{\Gamma(N/2)}<\sqrt{\frac{N}{2\pi}}.
\]
Combining this with (\ref{eq:KR1stmoment2}) and (\ref{eq:KR2}), we obtain that
the ratio in (\ref{eq:lim1}) is bounded by 
\begin{equation}
\sqrt{\frac{N}{2\pi}}\int_{-\epsilon_{N}}^{\epsilon_{N}}\frac{D(r)}{(\E\J(\bM(\bx)))^{2}}(1-r^{2})^{\frac{N}{2}}\prod_{k=1}^{\K}\Big(1-\frac{\xi_{N,k}(r)^{2}}{\xi_{N,k}(1)^{2}}\Big)^{-\frac{1}{2}}dr,\label{eq:lim2}
\end{equation}
where $\bx\in\SN$ is arbitrary and $D(r)$ and $\bM(\bx)$ are defined
in (\ref{eq:D(r)}) and (\ref{eq:Mx}).

For any $r\in(-1,1)$, $\Xi$, $N$ and $\K$,
\[
\prod_{k=1}^{\K}\Big(1-\frac{\xi_{N,k}(r)^{2}}{\xi_{N,k}(1)^{2}}\Big)^{-\frac{1}{2}}\leq\Big(1-r^{4}\Big)^{-\frac{\K}{2}}\leq\Big(1-r^{4}\Big)^{-\frac{N}{2}}.
\]
For any $t\in\R$, $1+t\leq e^t$. For $t\in(0,1/2)$, $1/(1-t)\leq e^{2t}$.
Thus, assuming the lemma below, for large $N$, (\ref{eq:lim2}) is bounded from above
by
\[
\frac{N}{N-1}\sqrt{\frac{N}{2\pi}}\int_{-\epsilon_{N}}^{\epsilon_{N}}\exp\left(-\frac{Nr^{2}}{2}+Nr^4+2\sqrt{N}r^{2}\right)dr\leq
\frac{N}{N-1}\sqrt{\frac{1}{2\pi}}\int_{-\epsilon_{N}N^{\frac12}}^{\epsilon_{N}N^{\frac12}}\exp\left(-\frac{r^{2}}{2}+\frac{r^4}{N}+\frac{2r^2}{\sqrt N}\right)dr.
\]
The contribution to the integral from the complement of $[-c\log N,c\log N]$ is bounded by $1/N^2$ for a large enough constant $c>0$. Thus, it is enough to bound the expression above with the integral only over $[-c\log N,c\log N]$. For the latter, we may remove the $r^4/N$ term and multiply by the integral $1+O(\log N^4/N)$ to get an upper bound. Finally, by a change of variables we may move to the standard Gaussian density, where we need to multiply by the Jacobian $1/\sqrt{1-4/\sqrt{N}}$.
This proves (\ref{eq:lim1}) and thus (\ref{eq:momentsmatching}). It remains to prove the following lemma.
\begin{lem}
\label{lem:Dbar}As $N\to\infty$, uniformly in $r\in(-1,1)$ and $K\leq N$ and any $\Xi$, 
\begin{equation}
\frac{D(r)}{(\E\J(\bM(\bx)))^{2}}\leq 1+\frac{1}{N-1}+r^2\sqrt{ \frac{N\pi} 2}(1+o(1)).\label{eq:Dbar-1}
\end{equation}
\end{lem}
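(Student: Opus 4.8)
The plan is to bound the conditional second-moment quantity $D(r)$ by relating the conditional law of the pair $(\bM(\bx),\bM(\by))$ to that of a single $\bM(\bx)$, and then carefully controlling the correlations that appear when $r$ is small but nonzero. Recall from the proof of Lemma~\ref{lem:general_bd} that, for an appropriate frame field, conditionally on $\vec f_{\K}(\bx)=\vec f_{\K}(\by)=0$, the matrices $\bM(\bx)$ and $\bM(\by)$ agree in their first $N-1$ columns with an i.i.d.\ $\N(0,1)$ matrix, and differ only in the last column, whose $k$-th entries are centered jointly Gaussian with variances $\lambda^{(k)}(r)$ (given by \eqref{eq:lambdak}) and a cross-covariance that can be read off from Lemma~\ref{lem:conditionalGradient}. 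In particular, at $r=0$ one has $\lambda^{(k)}(0)=1$ and zero cross-covariance, so $D(0)=(\E\J(\bM(\bx)))^2$ and the ratio equals $1$; the point of the lemma is to quantify the $O(r^2)$ deviation for small $r$.

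First I would write $\J(\tilde\bM)=\prod_{k=1}^{\K-1}\chi_{N-k}\cdot\sqrt{\|Z\|^2+\chi_{N-\K}^2}$ as in the proof of Lemma~\ref{lem:general_bd}, but now I must track the \emph{joint} law of the two last columns $Z(\bx)$, $Z(\by)$ rather than bounding by Cauchy--Schwarz. The key structural fact is that $\J(\bM(\bx))$ and $\J(\bM(\by))$ share the randomness in columns $1,\dots,N-1$ and differ only through the one extra coordinate contributed by $Z(\bx)$ vs.\ $Z(\by)$. Conditioning on the common part, $D(r)$ becomes $\E\big[g(\|\text{proj of }Z(\bx)\|)\,g(\|\text{proj of }Z(\by)\|)\big]$ for a suitable function $g$ and a rank-one Gaussian pair whose correlation $\rho_N(r)$ I can compute explicitly from Lemma~\ref{lem:conditionalGradient}: it is of the form $(\text{cross-cov})/\sqrt{\lambda^{(k)}(r)\lambda^{(k)}(r)}$, which is $1+O(r^2)$ uniformly (using $\xi'(0)=\xi''(0)=0$ in fact $\xi(r)=a_2^2 r^2+\cdots$, so the relevant quantities are smooth in $r^2$ near $0$). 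Then I would use a Gaussian interpolation / Price-type formula, or simply the bound $\E[UV]\le \E[U^2]^{1/2}\E[V^2]^{1/2}$ refined by the explicit correlation, to get $D(r)\le (\E\J(\bM(\bx)))^2(1+C\,r^2\,\cdot(\text{something of size }\sqrt N))$. The $\sqrt{N\pi/2}$ factor should emerge because $\E[\chi_{N-\K+1}]/\big(\E[\chi_{N-\K}]\big)$-type ratios, or more precisely the derivative of the map $\lambda\mapsto \E\sqrt{\|Z\|^2+\lambda\chi^2}$ near $\lambda=1$, carries a factor growing like $\sqrt N$; the constant $\sqrt{\pi/2}$ is the mean of a standard half-normal and enters through $\E[\chi_1]=\sqrt{2/\pi}$-type identities.

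The term $\frac{1}{N-1}$ I expect to come from the diagonal ($k=j$) contribution, i.e.\ the exact second moment $\E[\J(\tilde\bM)^2]=\frac{(N-1)!}{(N-\K)!}(N-\K+\sum_k\lambda^{(k)}(r))$ already appearing in \eqref{eq:JMtilde}, compared against $(\E\J(\bM(\bx)))^2$ which replaces each Chi second moment by a squared Chi first moment; the ratio of $\E[\chi_m^2]$ to $(\E[\chi_m])^2$ is $1+O(1/m)$, and summing the $O(1/m)$ over $m$ from $N-\K$ to $N-1$ telescopes to something like $\frac12\log\frac{N-1}{N-\K}$, which I would bound crudely — but to land exactly $\frac{1}{N-1}$ I suspect one must instead use a sharper elementary inequality, e.g.\ $\E[\chi_m^2]/(\E[\chi_m])^2\le 1+\frac{1}{2m}$ together with a product estimate $\prod(1+\frac{1}{2m})\le 1+\frac{1}{N-1}$ valid for the relevant range, or absorb the log-correction into the $o(N^{-1/2})$ by noting $r^2$ is then multiplied against it. I would organize the final bound as: (i) decompose $D(r)/(\E\J(\bM(\bx)))^2 - 1$ into a "$\lambda^{(k)}(r)$-deviation" part and a "correlation $\rho_N(r)$-deviation" part; (ii) bound each $1-\lambda^{(k)}(r)=O(r^2)$ and $1-\rho_N(r)=O(r^2)$ uniformly in $k$, $N$, $\Xi$ using \eqref{eq:lmbdabd2} and the explicit formulas; (iii) feed these into the Chi-variable representation to extract the factors $\frac{1}{N-1}$ and $r^2\sqrt{N\pi/2}$.

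The main obstacle will be step (ii)--(iii): making the dependence on $r^2$ \emph{uniform over all $\Xi$ and all $\K\le N$} while simultaneously tracking the exact constants $1$, $\frac{1}{N-1}$, and $\sqrt{\pi/2}$. The uniformity over $\Xi$ is delicate because $\lambda^{(k)}(r)$ and the cross-covariance depend on $\xi_{N,k}$ through ratios like $\xi'(r)^2(1-r^2)/(\xi(1)^2-\xi(r)^2)$; one needs the normalized bound \eqref{eq:ub}-type estimate $\frac{1-r^2}{1-\nu(r)^2}\le\frac{1}{1+r^2}$ together with a matching lower bound $\ge 1-Cr^2$ to pin down that the deviation is genuinely $\Theta(r^2)$ with a $\Xi$-independent constant — here the fact that the summation in \eqref{eq:random_poly} starts at $p=2$ (so $\nu(r)=r^2(b_2^2+b_3^2 r+\cdots)$, hence $1-\nu(r)^2 = 1-r^4(1+o(1))$ near $0$) is exactly what makes the leading correction $r^2$-controlled uniformly. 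The constant $\sqrt{\pi/2}$ is the least robust part and I would double-check it via the identity $\frac{d}{d\lambda}\big|_{\lambda=1}\E\sqrt{\|Z\|^2+\lambda\,W^2}$ with $W\sim\N(0,1)$ independent of $\|Z\|^2\sim\chi^2_{N-\K}$, which after integrating out gives a ratio of Gamma functions asymptotic to $\sqrt{N/(2\pi)}\cdot$const; multiplied by the prefactor $\sqrt{N/(2\pi)}$ already present in \eqref{eq:lim2} this is consistent with the claimed $r^2\sqrt{N\pi/2}$ only if the two $\sqrt{N/(2\pi)}$'s and a factor of $\pi$ combine correctly, so the bookkeeping there is where I would be most careful.
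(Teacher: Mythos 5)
Your proposal starts from a false ``key structural fact.'' You claim that, conditionally on $\vec f_{\K}(\bx)=\vec f_{\K}(\by)=0$, the matrices $\bM(\bx)$ and $\bM(\by)$ \emph{agree} in their first $N-1$ columns and differ only in the last column. That is not what Lemma~\ref{lem:conditionalGradient} says: for $i<N$ the conditional cross-covariance is $\E_0\{E_if(\bx)\,E_if(\by)\}=\xi'(r)$, so the first $N-1$ columns of the two conditioned matrices are merely \emph{correlated} (per entry, with correlation $\xi'_{N,k}(r)/\xi'_{N,k}(1)$), not shared. The proof of Lemma~\ref{lem:general_bd} only describes the \emph{marginal} conditional law of $\bM(\bx)$ (and separately, of $\bM(\by)$); it never says they share the first $N-1$ columns. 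Your proposed decomposition of $D(r)$ as $\E\big[g(\cdot)g(\cdot)\big]$ after conditioning on ``the common part'' therefore has no common part to condition on, and the argument does not get off the ground.

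The mechanism the paper actually uses to extract the $1+O(1/N)+r^2\cdot\Theta(\sqrt N)$ structure is also absent from your plan, and it is the crux of the proof. After relaxing the last column so the marginals become i.i.d.\ $\N(0,1)$ (your intuition here is right), one introduces a one-parameter family $\mathbf{W}^{(i)}(t)$ in which the off-diagonal correlations of columns $1,\dots,N-1$ are scaled by $t$ while the last columns are held fixed, and sets $\eta(t)=\E[\J(\mathbf{W}^{(1)}(t))\J(\mathbf{W}^{(2)}(t))]$ so that $D(r)\le\eta(r)$. Lemma~\ref{lem:GaussianBound} then shows $\eta(t)=\sum_k\alpha_k t^k$ with all $\alpha_k\ge 0$, a sign-flip symmetry kills all odd coefficients, and convexity of the remaining even power series gives $\eta(r)\le\eta(0)+r^2\eta(1)$. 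This is the step that produces the additive split into a ``$1+\tfrac{1}{N-1}$'' part (from $\eta(0)$) and a ``$r^2\sqrt{N\pi/2}$'' part (from $\eta(1)$). You gesture at ``Gaussian interpolation / Price-type formula'' but never formulate the nonnegative power-series lemma or the parity trick, which is precisely what makes the bound close uniformly in $\Xi$ and $\K$. Finally, your guess that $1/(N-1)$ arises from a \emph{product} of $\E\chi_m^2/(\E\chi_m)^2$ corrections is off: in the paper this term is just the single ratio $\E\chi_N^2/(\E\chi_N)^2\le N/(N-1)$ (Gautschi), while the product of Chi-moment ratios is what produces the $r^2\sqrt{N\pi/2}$ factor. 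As written, your proposal would not yield the stated bound.
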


\begin{proof}
Recall the definition of $D(r)$ in (\ref{eq:D(r)}). By Lemma \ref{lem:conditionalGradient},
\[
D(r)=\E\left[\J(\bM^{(1)}(r))\J(\bM^{(2)}(r))\right],
\]
where $\bM^{(1)}(r)$ and $\bM^{(2)}(r)$ are jointly Gaussian, marginally
each has the same distribution as $\tilde{\bM}$ defined in (\ref{eq:Mtilde-1}),
and the covariance between the elements of $\bM^{(1)}(r)$ and $\bM^{(2)}(r)$
is only between elements with the same indices and is given by 
\begin{align}
 & \E\left\{ \bM_{kj}^{(1)}(r)\bM_{kj}^{(2)}(r)\right\} \nonumber \\
 & =\begin{cases}
\frac{\xi_{N,k}'(r)}{\xi_{N,k}'(1)} & j<N\\
r\frac{\xi_{N,k}'(r)}{\xi_{N,k}'(1)}-\frac{\xi_{N,k}''(r)}{\xi_{N,k}'(1)}(1-r^{2})-\frac{\xi_{N,k}'(r)^{2}}{\xi_{N,k}'(1)}\frac{(1-r^{2})}{\xi_{N,k}(1)^{2}-\xi_{N,k}(r)^{2}}\xi_{N,k}(r) & j=N.
\end{cases}\label{eq:cov}
\end{align}
Define $\hat{\bM}^{(1)}(r)$ as the matrix obtained from $\bM^{(1)}(r)$
by adding, for each $k$, to $\bM_{kN}^{(1)}(r)$ a centered Gaussian
variable independent of everything else with variance $1-\lambda^{(k)}(r)$
(see (\ref{eq:lambdak})) so that the elements of $\hat{\bM}^{(1)}(r)$
are i.i.d. $\N(0,1)$ variables. Define $\hat{\bM}^{(2)}(r)$ similarly
and note that the covariance between the elements of $\hat{\bM}^{(1)}(r)$
and $\hat{\bM}^{(2)}(r)$ is also given by (\ref{eq:cov}). We note
that
\begin{equation}
D(r)\leq\E\left[\J(\hat{\bM}^{(1)}(r))\J(\hat{\bM}^{(2)}(r))\right],\label{eq:Dbd}
\end{equation}
by the following. Suppose that $\mathbf{A}$ is some random $\K\times N$
matrix and $X$ and $Y$ are two random variables such that $X$ is
independent of $(\mathbf{A},Y)$. Define $\mathbf{B}=\mathbf{A}+Xe_{\K,N}$
where $e_{\K,N}$ is the $\K\times N$ matrix with $1$ at the lower-right
corner and $0$ elsewhere. Recall that $\J(\mathbf{B})=\prod_{k=1}^{\K}\Theta_{k}(\mathbf{B})$
with $\Theta_{k}$ as defined below (\ref{eq:QM}). Hence, $\E\big[|\J(\mathbf{B})Y|\,\big|\,\mathbf{A},Y\big]=\E|Z\langle u,v+Xe_{N}\rangle|$,
for some random variable $Z$ and vectors $u$ and $v$ all measurable
w.r.t. $(\mathbf{A},Y)$ and $e_{N}=(0,\ldots,0,1)\in\R^{N}$. 

Fix some $r\in(-1,1)$. For any $t\in[-1,1]$ define the two jointly
Gaussian $\K\times N$ matrices $\mathbf{W}^{(1)}(t)$ and $\mathbf{W}^{(2)}(t)$
as follows. Elements not in the last column of both matrices (i.e.,
$j<N$) are correlated only if they have the same indices and in this
case, with $\rho_{N,k}(r)=\xi_{N,k}'(r)/r\xi_{N,k}'(1)$,
\[
\Big((\mathbf{W}^{(1)}(t))_{kj},(\mathbf{W}^{(2)}(t))_{kj}\Big)\sim\N\left(0,\left(\begin{array}{cc}
1 & t\rho_{N,k}(r)\\
t\rho_{N,k}(r) & 1
\end{array}\right)\right).
\]
For all $t$, the last column of $\mathbf{W}^{(1)}(t)$ and $\mathbf{W}^{(2)}(t)$
have the same joint distribution as the last column of $\hat{\bM}^{(1)}(r)$
and $\hat{\bM}^{(2)}(r)$, and are independent of all elements not
in the last column.

Define the function 
\[
\eta(t):=\E\left[\J(\mathbf{W}^{(1)}(t))\J(\mathbf{W}^{(2)}(t))\right]
\]
and note that $D(r)\leq\eta(r)$ since
\[
\eta(r)=\E\left[\J(\hat{\bM}^{(1)}(r))\J(\hat{\bM}^{(2)}(r))\right].
\]
The covariance matrix of $(\mathbf{W}^{(1)}(t),\mathbf{W}^{(2)}(t))$
satisfies the conditions of the following general lemma which we prove
in the appendix. Related less general inequalities are proved in \cite{BayatiMontanari2012,2nd}. 
\begin{lem}
\label{lem:GaussianBound}Suppose that $(X_{t},Y_{t})\sim\mathsf{N}(0,\Sigma(t))$
for any $t\in[-1,1]$, where for some $n\times n$ positive semi-definite
matrices $\Sigma_{0}$, $\Sigma_{1}$ and $\Sigma$, 
\begin{equation}
\Sigma(t):=\left(\begin{array}{cc}
\Sigma_{0} & \Sigma_{1}+t\Sigma\\
\Sigma_{1}+t\Sigma & \Sigma_{0}
\end{array}\right).\label{eq:XtYt}
\end{equation}
Let $f:\R^{n}\to\R$ be a (measurable) function such that $\E\left\{ f(X_{t})^{2}\right\} <\infty$.
Then there exist non-negative $\alpha_{0},\alpha_{1},\ldots$ such
that for any $t\in[-1,1]$,
\begin{equation}
\E\left\{ f(X_{t})f(Y_{t})\right\} =\sum_{k=0}^{\infty}\alpha_{k}t^{k}.\label{eq:Eff}
\end{equation}
\end{lem}

From the lemma $\eta(t)=\sum_{k=0}^{\infty}\alpha_{k}t^{k}$ for some
$\alpha_{k}\geq0$. Denote by $\hat{\mathbf{W}}^{(2)}(t)$ the matrix
obtained from $\mathbf{W}^{(2)}(t)$ by multiplying all but its last
column by $-1$. Note that $\J(\hat{\mathbf{W}}^{(2)}(t))=\J(\mathbf{W}^{(2)}(t))$
and that $(\mathbf{W}^{(1)}(t),\hat{\mathbf{W}}^{(2)}(t))$ and $(\mathbf{W}^{(1)}(-t),\mathbf{W}^{(2)}(-t))$
have the same law. Therefore, $\eta(t)=\eta(-t)$ and $\alpha_{k}=0$
for all odd $k$. Hence,
\begin{equation}
\eta(t)\leq\eta(0)+t^{2}(\eta(1)-\eta(0))\leq\eta(0)+t^{2}\eta(1).\label{eq:ineq1}
\end{equation}

Recall that $\J(\mathbf{W}^{(1)}(t))=\prod_{k=1}^{\K}\Theta_{k}(\mathbf{W}^{(1)}(t))$
with $\Theta_{k}$ as defined below (\ref{eq:QM}). Using Cauchy-Schwarz,
if we let $\chi_{j}$ be independent Chi variables with $j$ degrees
of freedom,
\begin{equation}
\eta(1)\leq\E\left[\J(\mathbf{W}^{(1)}(t))^{2}\right]=\prod_{k=0}^{\K-1}\E(\chi_{N-k}^{2}).\label{eq:ineq2}
\end{equation}

To bound $\eta(0)$ we will use a similar argument to the one used
in the proof of Lemma \ref{lem:general_bd}. Denote the last column
of $\mathbf{W}^{(i)}(0)$ by $Z^{(i)}$ and let $\mathbf{Q}^{(i)}$
be a $\K\times\K$ orthogonal matrix, measurable w.r.t. $Z^{(i)}$,
such that
\[
\mathbf{Q}^{(i)}\mathbf{W}^{(i)}(0)=\left(\begin{array}{cc}
\mathbf{A}^{(i)} & 0\\
V^{(i)} & \|Z^{(i)}\|
\end{array}\right),
\]
for some $\K-1\times N-1$ matrix $\mathbf{A}^{(i)}$ and row vector
$V^{(i)}$ of length $N-1$. Note that the elements of $\mathbf{A}^{(1)}$and
$\mathbf{A}^{(2)}$ and of $V^{(1)}$ and $V^{(2)}$ are all independent
of each other and of $\|Z^{(1)}\|$ and $\|Z^{(2)}\|$, and that they
all have the same distribution $\N(0,1)$. Since $\J(\mathbf{W}^{(i)}(0))=\J(\mathbf{Q}^{(i)}\mathbf{W}^{(i)}(0))$
and $\J(\mathbf{W}^{(i)}(0))=\prod_{k=1}^{\K}\Theta_{k}(\mathbf{W}^{(i)}(0))$,
in distribution 
\[
\J(\mathbf{W}_{N}^{(1)}(0))\J(\mathbf{W}_{N}^{(2)}(0))=\prod_{i=1,2}\Big(\sqrt{\|Z^{(i)}\|^{2}+\chi_{i,N-\K}^{2}}\prod_{k=1}^{\K-1}\chi_{i,N-k}\Big)
\]
where $\chi_{i,j}$ are independent Chi variables with $j$ which
are also independent of $\|Z^{(1)}\|$ and $\|Z^{(2)}\|$. Hence,
using Cauchy-Schwarz,
\begin{equation}
\eta(0)\leq\E(\chi_{N}^{2})\prod_{k=1}^{\K-1}(\E\chi_{N-k})^{2}.\label{eq:eta0}
\end{equation}

For $\bM(\bx)$ whose elements are i.i.d. $\N(0,1)$ variables, e.g.
by using that $\J(\bM(\bx))=\prod_{k=1}^{\K}\Theta_{k}(\bM(\bx))$,
\begin{equation}
(\E\J(\bM(\bx)))^{2}=\prod_{k=0}^{\K-1}(\E\chi_{N-k})^{2}.\label{eq:M}
\end{equation}

By combining (\ref{eq:ineq1}), (\ref{eq:ineq2}), (\ref{eq:eta0})
and (\ref{eq:M}), the left-hand side of (\ref{eq:Dbar-1}) is bounded
from above by
\begin{align*}
\frac{\eta(r)}{(\E\J(\bM(\bx)))^{2}} & \leq\frac{\E(\chi_{N}^{2})}{(\E\chi_{N})^{2}}+r^{2}\frac{\prod_{k=0}^{\K-1}\E(\chi_{N-k}^{2})}{\prod_{k=0}^{\K-1}(\E\chi_{N-k})^{2}}.
\end{align*}
The right-hand side is maximal for $\K=N$. Recall that $\E\chi_{j}=\sqrt{2}\,\frac{\Gamma((j+1)/2)}{\Gamma(j/2)}$
and $\E\chi_{j}^{2}=j$. By Gautschi's inequality, the first summand bounded by $N/(N-1)$. Using Stirling's approximation, one can show that the second summand is bounded by $r^2\sqrt{ N\pi /2}(1+o(1))$. Thus,  (\ref{eq:Dbar-1}) follows.
\end{proof}

\subsection*{Acknowledgements }

I am grateful to Mark Rudelson for an inspiring presentation of his
work with Barvinok \cite{RudelsonBarvinok} delivered in the Weizmann
Institute of Science, from which I have learned about the problem
studied in this paper. This project was supported by the Israel Science
Foundation (Grant Agreement No. 2055/21) and a research grant from
the Center for Scientific Excellence at the Weizmann Institute of
Science. The author is the incumbent of the Skirball Chair in New
Scientists.

\section*{Appendix: proof of Lemma \ref{lem:GaussianBound}}

Using that $\Sigma(1)$ is positive semi-definite, one can easily
verify that $\Sigma_{0}-\Sigma_{1}-\Sigma$ and therefore 
\[
\hat{\Sigma}(t):=\left(\begin{array}{cc}
\Sigma_{0}-\Sigma_{1} & t\Sigma\\
t\Sigma & \Sigma_{0}-\Sigma_{1}
\end{array}\right)
\]
are positive semi-definite. If $(\hat{X}_{t},\hat{Y}_{t})\sim\mathsf{N}(0,\hat{\Sigma}(t))$
and $Z\sim\mathsf{N}(0,\Sigma_{1})$, then $(\hat{X}_{t}+Z,\hat{Y}_{t}+Z)$
and $(X_{t},Y_{t})$ have the same law. By conditioning on $Z$ one
can check that if we can prove the lemma for $(\hat{X}_{t},\hat{Y_{t}})$
then it follows for the original pair $(X_{t},Y_{t})$. Hence, it
will be enough to prove the lemma assuming that $\Sigma_{1}=0$, as
we shall.

Next we will prove the lemma assuming in addition that $f$ is a polynomial
function, say of degree $d$. Denote the left-hand side of (\ref{eq:Eff})
by $\Lambda(t)$. By Wick's Theorem, $\Lambda(t)$ is also a polynomial
function. Hence, for any $t\in[-1,1]$,
\[
\Lambda(t)=\sum_{k=0}^{2d}\frac{1}{k!}\Lambda^{(k)}(0)t^{k},
\]
where $\Lambda^{(k)}(t)$ is the $k$-th derivative of $\Lambda(t)$.
We need to prove that $\Lambda^{(k)}(0)\geq0$. \WLOG~we may assume
that $\Sigma(t)$ is non-singular since by Wick's theorem $\Lambda^{(k)}(0)$
are continuous in the entries of $\Sigma_{0}$. For $k=0$, since
$X_{0}$ and $Y_{0}$ are i.i.d.,
\[
\Lambda^{(0)}(0)=\Lambda(0)=(\E f(X_{0}))^{2}\geq0.
\]

Suppose $\varphi_{C}(w)=\det(2\pi C)^{-1/2}\exp(-\frac{1}{2}w^{T}C^{-1}w)$
is the density of a centered Gaussian vector $W_{C}\in\R^{k}$ with
non-singular covariance matrix $C=(C_{ij})$. From integration by
parts and the well known fact that for $i\neq j$,\footnote{Where $\frac{\partial}{\partial C_{ij}}\varphi_{C}\left(x\right):=\lim_{\epsilon\to0}\frac{1}{\epsilon}\big(\varphi_{C+\epsilon(e_{ij}+e_{ji})}\left(x\right)-\varphi_{C}\left(x\right)\big)$,
with $e_{ij}$ being the standard basis to account for the symmetry
of $C$.} 
\[
\frac{\partial}{\partial C_{ij}}\varphi_{C}\left(x\right)=\frac{\partial}{\partial w_{i}}\frac{\partial}{\partial w_{j}}\varphi_{C}\left(w\right),
\]
one has that, for any continuously twice differentiable function $g:\,\mathbb{R}^{k}\to\mathbb{R}$
with polynomial growth rate as $|w|\to\infty$,
\[
\frac{\partial}{\partial C_{ij}}\mathbb{E}\left\{ g\left(W_{C}\right)\right\} =\int g\left(w\right)\frac{\partial}{\partial C_{ij}}\varphi_{C}\left(w\right)dw=\int\left(\frac{\partial}{\partial w_{i}}\frac{\partial}{\partial w_{j}}g\left(w\right)\right)\varphi_{C}\left(w\right)dw.
\]

Denote $f_{i_{1},\ldots,i_{k}}(x)=\frac{d}{dx_{1}}\cdots\frac{d}{dx_{k}}f(x)$.
By applying the above with $g(x,y)=f(x)f(y)$ and $C=\Sigma(t)$ we
obtain that 
\[
\frac{d}{dC_{i,n+j}}\E\left\{ f(X_{t})f(Y_{t})\right\} =\E\left\{ f_{i}(X_{t})f_{j}(Y_{t})\right\} ,
\]
and therefore
\begin{align*}
\Lambda'(t) & =\sum_{i,j=1}^{n}\Sigma_{ij}\E\left\{ f_{i}(X_{t})f_{j}(Y_{t})\right\} 
\end{align*}
and since $X_{0}$ and $Y_{0}$ are i.i.d., for $V=\left(\E f_{i}(X_{0})\right)_{i\leq n}$,
\begin{align*}
\Lambda'(0) & =\E V^{T}\Sigma V\geq0.
\end{align*}

More generally, by induction,
\begin{align*}
\Lambda^{(k)}(t) & =\sum_{i_{1},j_{1},\ldots,i_{k},j_{k}=1}^{n}\Sigma_{i_{1}j_{1}}\cdots\Sigma_{i_{k}j_{k}}\E\left\{ f_{i_{1},\ldots,i_{k}}(X_{t})f_{j_{1},\ldots,j_{k}}(Y_{t})\right\} ,
\end{align*}
and by independence
\begin{align*}
\Lambda^{(k)}(0) & =\E V_{k}^{T}(\Sigma\otimes\cdots\otimes\Sigma)V_{k}\geq0,
\end{align*}
where $\otimes$ is the Kronecker product and $V_{k}$ is the vectorization
of $(\E f_{i_{1},\ldots,i_{k}}(X_{0}))_{i_{1},\ldots,i_{k}=1}^{n}$
in the appropriate order to match the summation above, and the inequality
follows since the Kronecker product of positive semi-definite matrices
is positive semi-definite. This completes the proof for polynomial
$f$.

Next, assume that $(X_{t},Y_{t})$ is as in the lemma and $f$ is
an arbitrary function such that $\E\left\{ f(X_{t})f(Y_{t})\right\} \leq\E\left\{ f(X_{t})^{2}\right\} <\infty$.
Let $f_{i}$ be a sequence of polynomial functions such that $\|f(X_{t})-f_{i}(X_{t})\|_{2}\overset{i\to\infty}{\longrightarrow}0$,
where $\|Y\|_{p}=(\E|Y|^{p})^{1/p}$. The existence of such sequence
can be easily proved using the fact that Hermite polynomials are dense
in the $L^{2}$ space of the Gaussian measure. For some non-negative
$\alpha_{0}^{(i)},\alpha_{1}^{(i)},\ldots$ and $t\in[-1,1]$, $\E\left\{ f_{i}(X_{t})f_{i}(Y_{t})\right\} =\sum_{k=0}^{\infty}\alpha_{k}^{(i)}t^{k}$.
Note that
\begin{align*}
 & \Big|\E\left\{ f(X_{t})f(Y_{t})\right\} -\E\left\{ f_{i}(X_{t})f_{i}(Y_{t})\right\} \Big|\\
 & \leq\|f(X_{t})f(Y_{t})-f_{i}(X_{t})f(Y_{t})\|_{1}+\|f_{i}(X_{t})f(Y_{t})-f_{i}(X_{t})f_{i}(Y_{t})\|_{1}\\
 & \leq\|f(X_{t})-f_{i}(X_{t})\|_{2}\|f(Y_{t})\|_{2}+\|f(Y_{t})-f_{i}(Y_{t})\|_{2}\|f_{i}(X_{t})\|_{2}.
\end{align*}
Hence, 
\[
\sup_{t\in[-1,1]}\Big|\E\left\{ f(X_{t})f(Y_{t})\right\} -\sum_{k=0}^{\infty}\alpha_{k}^{(i)}t^{k}\Big|\overset{i\to\infty}{\longrightarrow}0.
\]

Using the fact that $\alpha_{k}^{(i)}\geq0$ for all $k$ and $i$,
it is a standard exercise to prove that $\alpha_{k}^{(i)}\overset{i\to\infty}{\longrightarrow}\alpha_{k}\geq0$
for some $\alpha_{k}$ and $\E\left\{ f(X_{t})f(Y_{t})\right\} =\sum_{k=0}^{\infty}\alpha_{k}t^{k}$.
This completes the proof.\qed

\bibliographystyle{plain}
\bibliography{master}

\end{document}